\theoremstyle{plain}
\newtheorem{prop}{Proposition}[section]
\newtheorem{thm}[prop]{Theorem}
\newtheorem{remark}[prop]{Remark}
\newtheorem{lemma}[prop]{Lemma}
\theoremstyle{definition}
\theoremstyle{remark}
\numberwithin{equation}{section}
\numberwithin{subsection}{section} 
\newcommand{\N}{\mathbb{N}}
\newcommand{\f}[1]{\mbox{${\mathbb F}_{q^{#1}}$}}
\newcommand{\vertex}{\hbox{\raise2.5pt\vbox{\hrule width 2truecm}}}
\newcommand{\svertex}{\hbox{\raise2.5pt\vbox{\hrule width .3truecm}}}
\newcommand{\sdvertex}{\hbox{\lower1.5pt
\vbox{\offinterlineskip\svertex\svertex}}}
\newcommand{\dvertex}{\hbox{\lower1.5pt
\vbox{\offinterlineskip\vertex\vertex}}}
\newcommand{\argu}{\hbox to 7truept{\hrulefill}}
\begin{document}
\title{$2$-coverings of classical groups}
\author{D. Bubboloni \and M. S. Lucido \and Th. Weigel}
\address{D. Bubboloni\\
Dipartimento di Matematica per le Decisioni, Universit\`a degli Studi di Firenze\\ Via C. Lombroso 6/17\\
I-50134 Firenze, Italy} \email{daniela.bubboloni@dmd.unifi.it}
\address{M.S.Lucido\\
Dipartimento di Matematica ed Informatica\\ Universit\`a degli Studi di Udine\\
Via delle Scienze 206\\ I-33100 Udine, Italy}
\email{mariasilvia.lucido@dimi.uniud.it}
\address{Th.Weigel\\
Dipartimento di Matematica ed Applicazioni\\ Universit\`a degli
Studi Milano-Bicoc\-ca\\ U5-3067, Via R.Cozzi, 53\\ I-20125
Milano, Italy} \email{thomas.weigel@unimib.it}
\subjclass[2000]{Primary 20G40; Secondary 20E32}

\begin{abstract}In this paper we show that if $n\geq 5$ and $G$
is any of the groups $SU_n(q)$ with $n\neq 6,$  $Sp_{2n}(q)$ with
$q$ odd, $\Omega_{2n+1}(q),$ $\Omega_{2n}^{\pm}(q),$ then $G$ and
the simple group $\overline G=G/Z(G)$ are not 2-coverable. Moreover
the only 2-covering of $Sp_{2n}(q),$ with $q$ even, has components $
O^-_{2n}(q)$  and $O^{+}_{2n}(q) .$

\end{abstract}
\maketitle

\section{Introduction}
\label{s:intro}

It is well known that a finite group $G$ is never the
set-theoretical union of the $G$-conjugates of a proper subgroup.
However there are examples of groups which are the set-theoretical
union of the $G$-conjugates of two proper subgroups.\\ Let $G$ be a
group and let $H,\ K$  be proper subgroups of $G.$ If every element
of $G$ is $G$-conjugate to an element of $H$, or to an element of
$K$ i.e. $$G=\bigcup_{g\in G} H^g \cup \bigcup_{g\in G} K^g,$$ then
$\delta=\{H,\ K\}$ is called a {\it $2$-covering} of $G$, with {\it
components} $H,\ K$ and $G$ is said {\it $2$-coverable}. For any
$H<G,$ we use the notation $[H]$ for $\bigcup_{g\in G} H^g.$ Observe
that we can assume $H,\ K$ maximal subgroups of $G.$ In what follows
the components of a $2$-covering will be tacitly assumed as maximal
subgroups of $G.$ If $H$ is a maximal subgroup of $G,$ we write $H<
\cdot\, G.$
 \\ 

In \cite{bl}, it has been proved that the  linear groups
$GL_{n}(q),$ $SL_{n}(q)$ and the projective groups  $PGL_{n}(q)$,
$PSL_{n}(q)$ are $2$-coverable if and only if $2 \le n \le 4$. Here
we consider the other classical groups. Our Main Theorem is a
collection of the statements
 in the sections 3-7 (see Propositions \ref{simplettici},
\ref{unitari},  \ref{ortogonali dispari}, \ref{ortogonali +},
\ref{ortogonali-}) and of
 Remark \ref{centre}.\\

{ \bf Main Theorem} {\em  Let $n\geq 5$ and $G$ be any of the groups
$SU_n(q)$ with $n\neq 6,$ $Sp_{2n}(q)$ with $q$ odd,
$\Omega_{2n}^{+}(q)$, $\Omega_{2n+1}(q),$ $\Omega_{2n}^{-}(q).$ Then
$G$ and the simple group $\overline G=G/Z(G)$ are not 2-coverable.
If $q$ is even the group $Sp_{2n}(q)$ has only a 2-covering which
has components isomorphic to $ O^-_{2n}(q)$  and $O^{+}_{2n}(q) .$}\\
R.H. Dye in \cite{Dye} proved that in fact $\,\{O^-_{2n}(q),\
O^{+}_{2n}(q)\}\,$ is a $2$-covering for $Sp_{2n}(q),$ with $q$
even.

\section{Preliminary facts}
Let $q=p^f$ be a prime power and $n \ge 3$. We consider the
following {\em classical groups} groups $G$: $$SU_n(q) \ \hbox{ with
}\ q=q_0^2,\ \quad Sp_{2n}(q),\quad \Omega_{2n+1}(q)\ \quad
\Omega_{2n}^{\pm}(q).$$ The corresponding {\em general classical
groups} $\tilde G$ are:
$$GU_n(q),\ \quad Sp_{2n}(q),\quad O_{2n+1}(q),\
\quad O_{2n}^{\pm}(q).$$ Observe that $\tilde {G}'=G.$\\

Let $V$ be the natural $\f{}\tilde{ G}$-module endowed with the
suitable non degenerate form and put $d=dim_{_{\,\f{}}}V.$
Sometimes, when we need to put in evidence the dimension $d$ and
the field $\f{},$ we will use the notation $G_d(q)$ instead of
$G.$
 If the action of $\,\langle g \rangle
\leq \tilde {G}\,$ decomposes $V$ into the direct sum of
irreducible submodules $V_i$ of dimensions $d_i,\ i=1,\dots,k\,$
we shortly say that {\em the action of $g\in \tilde {G}$ is of
type $d_1\oplus\cdots\oplus d_k.$} Note that $g$ operates irreducibly
 if and only if its characteristic polynomial is irreducible.\\
Since we want to decide when there exist maximal subgroups $H,\ K$
of $G$ such that any element of $G$ belongs to a conjugate  of $H$
or $K$ in $G,$ it is natural to adopt the systematic description of
the maximal subgroups of the classical groups given by M. Aschbacher
in \cite{a}. There, several families $\mathcal{C}_i,\ i=1,\dots,8$
of subgroups  were defined in terms of the geometric properties of
their action on the underlying vector space $V$ and the main result
states that any maximal subgroup belongs to
$\bigcup_{i=1}^{8}\mathcal{C}_i$ or to an additional family
$\mathcal{S}.$ For the notation, the structure theorems on these
maximal subgroups and other details of our investigation we refer to
the book of P. B. Kleidman and M. W. Liebeck (\cite{kl}): in
particular we use the definitions given there of the families
$\mathcal{C}_i.$

We
 consider some special elements to identify the components of a $2$-covering of a classical group.
  First of all we recall the elements
in $\tilde {G}$ or $G$ of maximal order with irreducible action on
$V,$ the so called {\em Singer cycles}.
 In Table \ref{1} we collect the general classical groups $\tilde{G}$
  for which the Singer cycles exist, the order of the Singer
  cycle in $\tilde G$ and in the classical group $G=\tilde{G}'$ (\cite {hu}).

\begin{table}[ht]
\caption{Orders of the Singer cycles}\label{1}
\begin{tabular}{|c|c|c|}
 \hline
 \qquad\  $\widetilde{G}$ \qquad\  &  \qquad\ order
\qquad\
   &  order in $ G=\widetilde{G}'$\   \\
\hline \hline
 $GL_n(q)$ &  $q^n-1$ & $(q^n-1)/(q-1)$  \\
 \hline
 $Sp_{2n}(q)$ &  $q^n+1$ & $q^n+1$ \\
 \hline
 $GU_{n}(q_0^2),$ \hbox{{\em n odd}} &  $q_0^n+1$  & $(q_0^n+1)/(q_0+1)$ \\
 \hline
$O_{2n}^-(q)$ &  $q^n+1$  & $(q^n+1)/(2,q-1)$  \\
 \hline
\end{tabular}
\end{table}
Recall that the Singer cycles are always, up to conjugacy, linear
maps $\pi_a:V\rightarrow V$ of $V=\f{d}$ given by multiplication
$\pi_a(v)=av$ in the field by a suitable $a\in \f{d}^*.$
 To manage expressions of the type $q^a\pm 1,$ it is useful to
state also an easy, technical lemma.
\begin{lemma}\label{aritme} Let $q$ a prime power and $a,\ b,\ n\in
\mathbb{N}.$ Then we have the following:
 \begin{itemize}
\item[ i)] $(q^{a}-1, q^{b}-1)=q^{(a,b)} -1$;
 \item[ ii)] if $a$
is odd then  $(q^{a} +1)/(q+1)$ is odd;
 \item[ iii)] $\bigg
(\frac{q^{a}-1}{q-1}, q-1\bigg )$ divides  $(a, q-1);$
 \item[ iv)]
if  $a$ is odd, then $\bigg (\frac{q^{a}+1}{q+1}, q+1\bigg )$
divides $(a, q+1)$ and $(q^a-1,q+1)=(2,q-1).$
\item[ v)]  if  $a$ is odd and $(a,b)=1$, then
$\bigg (\frac{q^{a}+1}{q+1}, q^{b}+1\bigg )$ divides $(a, q+1);$
 \item[ vi)] if $a$ is odd and $(a,b)=1$, then  $\bigg
(\frac{q^{a}+1}{q+1}, q^{b}-1\bigg )$  divides $(a, q+1)$;
\item[ vi)] $(q^n+1,q-1)=(2,q-1).$ If $n$ is even then
$(q^n+1,q+1)=(2,q-1).$
\end{itemize}
\end{lemma}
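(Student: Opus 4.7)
The lemma bundles a number of standard arithmetic facts about $q^a \pm 1$. My plan is to establish (i) by the Euclidean algorithm on exponents, derive (ii)--(iv) and (vii) by direct reductions modulo $q \pm 1$, and then bootstrap (v) and (vi) from (i) together with (iv).

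For (i), I would write $a = bt + r$ with $0 \le r < b$ and use $q^a - 1 = q^r(q^{bt}-1) + (q^r - 1)$ with $q^b - 1 \mid q^{bt}-1$ to get $(q^a-1,\,q^b-1) = (q^r-1,\,q^b-1)$, then iterate on the exponents until reaching $q^{(a,b)}-1$. For (ii)--(iv) and (vii) the main engines are the expansions
\[
\frac{q^a-1}{q-1} \;=\; \sum_{i=0}^{a-1} q^i, \qquad \frac{q^a+1}{q+1} \;=\; \sum_{i=0}^{a-1} (-1)^i\, q^{a-1-i}.
\]
Reducing the first modulo $q-1$ gives $(q^a-1)/(q-1) \equiv a$, which proves (iii); reducing the second modulo $q+1$ and using that $a$ is odd gives $(q^a+1)/(q+1) \equiv a(-1)^{a-1} = a$, which proves the first half of (iv). The congruence $q^a - 1 \equiv (-1)^a - 1 = -2 \pmod{q+1}$ for $a$ odd settles the second half of (iv), and (vii) is the same style of one-line reduction. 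For (ii) I would split on the parity of $q$ and count odd summands in the alternating expansion, observing that in both cases an odd total remains.

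For (v) and (vi), set $m = (q^a+1)/(q+1)$ and let $d$ denote the gcd in question. Since $m \mid q^{2a}-1$ and $d \mid q^b \pm 1 \mid q^{2b}-1$, part (i) yields $d \mid q^{(2a,\,2b)}-1 = q^2 - 1$, using $(a,b)=1$. A split on the parity of $b$ then disposes of each subcase. In (v) with $b$ even, $q^b \equiv 1 \pmod d$ forces $d \mid 2$, and the oddness of $m$ from (ii) gives $d = 1$. In (v) with $b$ odd, $q^b \equiv q \pmod d$ combined with $d \mid q^b+1$ yields $d \mid q + 1$, and (iv) then bounds $d \mid (m, q+1) \mid (a, q+1)$. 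In (vi) with $b$ odd, (i) gives the stronger $d \mid q^{(2a,b)}-1 = q - 1$, while the mod-$(q-1)$ reduction of the alternating expansion shows $m \equiv 1 \pmod{q-1}$, forcing $d = 1$. The subcase $b$ even of (vi) is the one needing extra care: only $d \mid q^2 - 1 = (q-1)(q+1)$ is directly available, so I would use $(m, q-1) = 1$ to obtain $(d, q-1) = 1$, and then the standard principle that $d \mid xy$ with $(d,x)=1$ implies $d \mid y$ yields $d \mid q + 1$, at which point (iv) finishes.

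The main obstacle I expect is the bookkeeping of parities and gcd's in (v) and (vi), particularly this last subcase of (vi), where one must rule out the possibility of $d$ straddling both factors of $q^2-1$. Once the coprimality $(m, q-1)=1$ is secured via the mod-$(q-1)$ computation, the argument becomes mechanical, and all remaining parts are one-line reductions modulo $q \pm 1$.
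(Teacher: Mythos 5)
The paper states this as ``an easy, technical lemma'' and supplies no proof at all, so there is nothing to compare your argument against. Your proof is correct and complete: part (i) via the Euclidean algorithm on exponents, the reductions of $\frac{q^a-1}{q-1}\equiv a \pmod{q-1}$ and $\frac{q^a+1}{q+1}\equiv a \pmod{q+1}$ for (iii), (iv) and (vii), the parity count for (ii), and the bootstrapping of (v) and (vi) from $d \mid q^{(2a,2b)}-1$ resp.\ $d\mid q^{(2a,b)}-1$ together with the coprimality $\bigl(\frac{q^a+1}{q+1},\,q-1\bigr)=1$ all check out, including the delicate subcase of (vi) with $b$ even where $d$ could a priori straddle both factors of $q^2-1$.
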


The notion of irreducibility of action on the natural module $V$ is
connected very strictly to that of primitive prime divisor,
which we recall here, for convenience of the reader.\\
Let $t\geq 2$ be a natural number. A prime $q_t$ is said to be a
{\em primitive prime divisor} of $q^t-1$ if $q_t$ divides $q^t-1$
and $q_t$ does not divide $q^i-1$ for any $1\leq i<t.$ It was proved
by Zsigmondy in \cite{zs} that if $t\geq 3$ and the pair $(q,t)$ is
not $(2,6),$ then $q^t-1$ has a primitive prime divisor. If $t=2$ a
primitive prime divisor exists if and only if $q\neq 2^i-1.$\\
Clearly if $q_t$ is a primitive prime divisor of $q^t-1$, then $q$
has order $t$ modulo $q_t$ and thus $t$ divides $q_t-1.$ In
particular $q_t \ge t+1.$
   Let $P_t(q)$ denote the set of primitive prime divisors $q_t$.
\begin{remark}\label{primitivi} Let $P_t(q)$ be the set of primitive prime
divisors  of $q^t-1.$ Then:
\begin{itemize}\item[i)] for any $k\in
\N,\ P_{kt}(q)\subseteq P_t(q^k).$ In particular if $q=q_0^2,$ then
$P_{2t}(q_0)\subseteq P_t(q);$
\item[ii)] if $t\neq s$, then
$P_t(q)\cap P_s(q)=\varnothing;$ \item[iii)] $q_a\in P_t(q)$ divides
$q^b-1$ if and only if $a$ divides $b;$ if $q_a$ divides $q^b+1,$
then a divides $2b.$
\end{itemize}
\end{remark}

 The natural
embeddings of classical (or general classical) groups $G$ of
dimension $d'$ into the corresponding classical groups of dimension
$d>d',$ as well as the embeddings $O_{2n}^{\pm}(q)< O_{2n+1}(q)$ and
$O_{2(n-1)}^{\pm}(q)<O_{2n}(q)^{\mp},$ give rise to some interesting
elements $\sigma$ which we call $low$-$Singer\ cycles.$ Their action
is always reducible and it decomposes $V$ into the direct sum of a
trivial submodule and an irreducible submodule $T.$ Observe that the
natural component for a low-Singer cycle is a suitable subgroup in
$\mathcal{C}_1$  of type orthogonal sum. We call
$t=dim_{_{\,\f{}}}T$ the {\it rank} of $\sigma.$ If $t$ is the
highest dimension of an irreducible $\f{}G$-submodule of $V,$ we
call $\sigma$ a {\em low-Singer cycle of maximal
 rank}.  In Theorem 1.1 of \cite {msw}, are determined the maximal subgroups
   containing a low-Singer cycle of rank $n-1$ in $SL_n(q)$, in $SU_n(q)$
   with $n$ even and in $\Omega_{2n+1}(q).$

 The well known diagonal embeddings of $GL_n(q)$ into
 $Sp_{2n}(q),\ GU_{2n}(q),\ O_{2n}^{+}(q)$ brings into those groups, elements of
 order $q^n-1$ and action $n\oplus n.$ We refer to them as
 \em{linear Singer cycles} of \em{dimension} $2n$.\\

We also need to introduce the fundamental facts about
the theory of the $ppd(d,q;e)$-$elements$ developed by R.
Guralnick, T. Penttila, C. E. Praeger and J. Saxl in \cite{gpps}.
 Through this paper, that theory will be the main tool in finding the maximal
  subgroups containing elements with order divisible by certain "large"
  primes. \\
An element of $GL_d(q)$ is called a $ppd(d,q;e)$-$element$ if its
order is divisible by some $q_e\in P_e(q)$ with $d/2 < e \le d.$ A
subgroup $M$ of $GL_d(q)$ containing a $ppd(d,q;e)$-element is said
to be a $ppd(d,q;e)$-$group.$ In \cite{gpps}
the complete list of the $ppd(d,q;e)$-groups is described.\\
Clearly, if $M\in \bigcup_{i=1}^{8}\mathcal{C}_i \bigcup
\mathcal{S}$ is a $ppd(d,q;e)$-group, then it belongs exactly to an
Example of the list: in particular if $M\in \mathcal{S},$ then $M$
is described in Examples 2.6-2.9.
\begin{thm}\label{main} \cite[Main Theorem]{gpps}
Let $q$ be a prime power and $d$ an integer, $d \ge2$. Then $M\le
GL_d(q)$ is a $ppd(d,q;e)$-group if and only if $M$ is one of the
groups in Examples 2.1-2.9. Moreover
\begin{itemize} \item[i)] $M\not \in \mathcal{C}_4\cup
\mathcal{C}_7;$ \item [ii)] if $e \le d-4,$ then $M$ is not one of
the groups described in the Examples 2.5, 2.6 b), 2.6 c), 2.7,
2.8, 2.9;\item[iii)] if $e=d-3$ and $M$ is one of the groups
described in the Examples 2.5, 2.6 b), 2.6 c), then $d$ is odd.
\end{itemize}
\end{thm}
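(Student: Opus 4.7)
The plan is to follow Aschbacher's reduction: since every maximal subgroup of $GL_d(q)$ lies in one of the geometric families $\mathcal{C}_1,\dots,\mathcal{C}_8$ or belongs to the almost simple family $\mathcal{S}$, I would enumerate the families and decide case by case which ones can admit a $ppd(d,q;e)$-element. The basic driving fact is that a primitive prime divisor $q_e\in P_e(q)$ with $d/2<e\le d$ satisfies $q_e\ge e+1>d/2$, so $q_e$ is an unusually large prime in $|GL_d(q)|$. If $x\in M$ has order divisible by $q_e$, then the $\langle x\rangle$-module $V=\F_q^d$ contains an irreducible summand of dimension exactly $e$ (because the characteristic polynomial of $x$ must have an irreducible factor of degree $e$ dividing $X^e-1$'s primitive part). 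Consequently $x$ acts on $V$ with type $e\oplus f$ where $f=d-e<e$, and the single summand of dimension $e$ is uniquely determined.

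Using this, I would sweep through the geometric classes. In $\mathcal{C}_1$, the stabilized subspace $U$ must be compatible with the $e\oplus f$ decomposition; this yields Example 2.1. In $\mathcal{C}_2$, the imprimitivity decomposition must be preserved by $\langle x\rangle$, and the uniqueness of the $e$-dimensional summand forces $x$ to stabilise each block, which produces the Example 2.2 list after a dimension count on $e$. In $\mathcal{C}_3$, $M$ lies in $\Gamma L_{d/b}(q^b)$; the constraint that $q_e$ divides $|\Gamma L_{d/b}(q^b)|$ combined with Remark \ref{primitivi}(iii) ($e\mid b\cdot(d/b)=d$ would be too strong; actually one gets that $e$ divides $b$ times the degree) identifies the allowed examples. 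For $\mathcal{C}_4$ (tensor decomposition) and $\mathcal{C}_7$ (tensor induced), a direct order computation on $GL_a(q)\otimes GL_b(q)$ or its wreath analogue shows that no prime divisor of the order exceeds the analogous bounds for the factors, so no ppd of large order can appear; this establishes (i). Classes $\mathcal{C}_5,\mathcal{C}_6,\mathcal{C}_8$ are handled by explicit order arithmetic using Lemma \ref{aritme}.

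The hard part is the class $\mathcal{S}$, where $M/Z(M)$ is almost simple acting absolutely irreducibly. Here I would invoke the classification of finite simple groups and use the known lists of low-dimensional cross-characteristic and defining-characteristic representations: the existence of a $ppd(d,q;e)$-element with $e>d/2$ forces $|M|$ to contain a prime $q_e\ge d/2+2$, and for each family of simple groups the minimal faithful representation degree is large enough to severely restrict the pairs $(M,d)$. Running this through the Landazuri--Seitz bounds for Lie-type groups, the character tables of sporadic and alternating groups, and the Steinberg/weight theory in defining characteristic (to bound orders of semisimple elements), one recovers precisely Examples 2.5--2.9. Statements (ii) and (iii) then emerge as refinements: Examples 2.5, 2.6(b), 2.6(c), 2.7--2.9 all arise from representations where $d-e\le 3$, because the relevant simple groups have an element of order a primitive prime divisor only in degree at most $3$ below the full dimension; a closer look at the parities of the representation degrees (dictated by symmetric/exterior square constructions and by the $\mathrm{P\Omega}$-decompositions in those examples) forces $d$ to be odd when $e=d-3$.

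The principal obstacle is the $\mathcal{S}$ analysis: it requires CFSG together with a systematic and tight bookkeeping on the possible irreducible modules of prescribed dimension that support a semisimple element of the prescribed large prime order. Everything else reduces, with some care on indices and subfield descents, to the arithmetic packaged in Lemma \ref{aritme} and the numerical identities of classical group orders.
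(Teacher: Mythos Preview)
The paper does not contain a proof of this theorem: it is quoted verbatim as the Main Theorem of \cite{gpps} and used as a black box throughout. There is therefore nothing in the paper to compare your argument against; the authors simply import the classification of $ppd(d,q;e)$-groups and then specialise it in each later lemma.

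As a sketch of the actual proof in \cite{gpps}, your outline is broadly in the right spirit (Aschbacher reduction plus CFSG for the class $\mathcal{S}$), but a couple of the geometric steps are not accurate. In $\mathcal{C}_2$ the $ppd$-element need not stabilise each block: the relevant examples arise precisely when $q_e$ divides the order of the symmetric group permuting the blocks, and the conclusion (Lemma~4.1 of \cite{gpps}) is that the block dimension is forced to be~$1$, not that $x$ fixes the decomposition. Your $\mathcal{C}_3$ reasoning is also garbled: the correct constraint is that $q_e$ divides $|\Gamma L_{d/b}(q^b)|$, which via Remark~\ref{primitivi} forces $b\mid e$ and leads to the short list in Example~2.4. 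These are fixable, but be aware that the genuine work in \cite{gpps} lies in the very long case analysis for $\mathcal{S}$, which your last two paragraphs only gesture at; the Landazuri--Seitz bounds alone are not sharp enough, and the paper \cite{gpps} supplements them with extensive ad hoc character-theoretic and modular-representation arguments to produce Tables~2--8.
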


We shall use the the theory of the $ppd(d,q;e)$-$elements$ for some
special elements.\\
Let $n\geq 5.$  If
  $n \ge 8,$ then by the Bertrand
Postulate, there exists a prime $t$ such that $n/2< t \leq n-3.$ If
$n=5,\ 6,\ 7$ we consider respectively $t=3,\ 4,\ 5,$ getting $\,n/2
< t \le n-2.$ Moreover if $n\neq 6,$ then $t$ is an odd prime with
$(n,t)=1$ and if $n \ge 7,$ then $t \ge 5$ . We call $t$ a {\it
Bertrand number} for $n$  ({\it a Bertrand prime} if $n\neq 6$).
Note that if $t$ is a Bertrand prime for $n$, then
 $\bigg (q^{t}+1, q^{n-t}+(-1)^{n} \bigg )= q+1.$\\
Given a Bertrand number we can define as in Table \ref{2}, an
element $z$ in the classical groups, called a {\it Bertrand
element}. Then $z$ is a $ppd(d,q;e)$-$elements$, as described in
Table \ref{2}.

 \begin{table}[ht]
\caption{Orders of the Bertrand elements $z$} \label{2}
\begin{tabular}{|c|c|c|c|}
\hline  $G$ & $order\ of \ z$ & $d$ & $ e$ \\
 \hline \hline
& & & \\
 $SU_n(q_0^2)$, $n \ne 6$, & $\frac{( q_0^t +1)(
q_0^{n-t} +(-1)^n)}{ q_0+1}$ &  $n$ & $t$\\ & & & \\ \hline
 & & & \\
$Sp _{2n}(q)$  & $\frac{(q^t + 1)(q^{n-t} + 1)}{(q^t +
1,q^{n-t} + 1)}$ & $2n$ & $2t$ \\ & & & \\ \hline & & & \\
$\Omega_{2n+1}( q)$ & $\frac{(q^t + 1)(q^{n-t}+  1)}{(q^t +
1,q^{n-t} + 1)}$ & $2n+1$ & $2t$ \\& & & \\

\hline
\end{tabular}\end{table}
Let $G=Sp_{2n}(q),\ \Omega_{2n+1}(q).$ Then $q_{2t}$ divides $|z|,$
for any $q_{2t}\in P_{2t}(q),$ hence if $M$ is a maximal subgroup of
$G$ containing $z$, as described in Table \ref{2}, then $M\in
\bigcup_{i=1}^{8}\mathcal{C}_i \bigcup \mathcal{S}$ has the
$ppd(d,q;e)$-property and  we can use the description given in
Theorem \ref{main} to determine the maximal
groups containing a Bertrand element.\\
Moreover, since $e \le d-4$ we reduce to the Examples 2.1, 2.2, 2.3,
2.4 and 2.6 a).\\

We will often be concerned with a particular class of
$ppd(d,q;e)$-elements: we say that an element of $GL_d(q)$ is a
$strong$ $ppd(d,q;e)$-$element$ if its order is divisible by any
$q_e\in P_e(q)$ with $d/2 < e \le d.$ Those elements cannot involve
the class $\mathcal{C}_5,$ in the sense of the following:\\

\begin{remark}\label{no-c5}
Let $G_d(q)$ be a classical group and let $q=\tilde q^r,$ for some
prime $r$. If $M<\cdot\ G_d(q)$ with $M\in\mathcal{C}_5$ is of
type $G_d(\tilde q),$ then $M$ contains no strong
$ppd(d,q;e)$-element.
\end{remark}
\begin{proof} Let  $y$ be a strong $ppd(d,q;e)$-element and
$y\in M <\cdot\ G_d(q)$ with $M\in\mathcal{C}_5$ of type
$G_d(\tilde q)$. Then $d/2 < e \le d$ and $\, q_e$ divides
$|y|\,$ for any $q_e \in P_e(q).$ By Remark \ref{primitivi} we
have $P_{re}(q)\subseteq P_{e}(\tilde q^r)=P_e(q)$ and by the
structure of $M$ given in \cite{kl}, we get $(\tilde q)_{re}$ divides
$|G_d(\tilde q)|$ for any $(\tilde q)_{re}\in P_{re}(\tilde q),$
which gives $re\leq d$ against $re\geq 2e>d.$
\end{proof}

Since we usually work with semisimple elements, we shall use some
facts about the maximal tori of the classical groups,
 in particular their orders and their action on
the natural module $V$, which can be easily found in \cite{GLS3}.

We close this section by observing that a $2$-covering for a
classical group exists if and only if it exists for the
corresponding projective group.
\begin{remark}\label{centre}
i)Let $G$ be a perfect group. If $M<\cdot\  G,$ then $M\geq Z(G).$\\
ii) Let $G$ be a classical group. Then $G$ is 2-coverable if and
only if $G/Z(G)$ is 2-coverable.
 \end{remark}
    \begin{proof} i) Assume $M<\cdot\  G,$ with $G$ perfect and
    $Z=Z(G)\not\leq M.$ Then we have $G=MZ$ and $M\lhd G$ with
    $G/M \cong Z/Z\cap M$  abelian. Hence $M\geq G'=G,$ a
    contradiction.\\
    ii) Let $G$ be a classical group and $\{H,\ K\}$ be a
    $2$-covering of $G$ with $H,\ K<\cdot\ G.$ Then $G$ is perfect
    and i) applies, yielding $H,\ K \geq Z(G).$ Thus, using the bar
    notation to take quotient modulo $Z(G),$ we observe that
        $\overline H,\ \overline K\neq \overline G$ are components of a
    $2$-covering of $\overline G.$ The converse is clear.
    \end{proof}
Finally, when considering the simple groups $S$ in ATLAS
\cite{atlas}, we often use the fact that if $M<\cdot\, S$ and $x\in
S,$ then $x\in [M],$ if and only if $\chi_M(x)\neq 0.$


\section{Symplectic  groups}
Let $G=Sp_{2n}(q)\ $ with $n\geq 5$ and let $s\in G$ be a Singer
cycle. Then  $|s|=q^{n}+1 $ and the maximal subgroups of $G$
containing $s$  are known.
\begin{lemma}\cite[Theorem 1.1]{msw}\label {msw-sp}
Let $G=Sp_{2n}(q)$, $n \ge 5$ and $M$  a maximal subgroup of $G$
containing a Singer cycle. Then, up to conjugacy, one of the
following holds:
\begin{itemize}
\item[i)] $M= Sp_{2n/k}(q^k).k$, with $k|n$ a prime; \item[ii)]
$q$ is  even and $M= O_{2n}^-(q);$ \item[iii)] $nq$ is odd and $M =
GU_n(q^2).2.$ 
\end{itemize}
\end{lemma}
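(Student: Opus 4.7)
The plan is to exploit the arithmetic of the Singer cycle $s \in Sp_{2n}(q)$. Since $|s|=q^n+1$ acts irreducibly on the natural module $V=\mathbb{F}_q^{2n}$, and since $n\geq 5$ forces $2n\geq 10\neq 6$, Zsigmondy yields a primitive prime divisor $q_{2n}\in P_{2n}(q)$; every such $q_{2n}$ divides $q^n+1$, so $s$ is in fact a \emph{strong} $ppd(2n,q;2n)$-element. The strategy is to go through Aschbacher's families $\mathcal{C}_1,\dots,\mathcal{C}_8,\mathcal{S}$ using Theorem \ref{main}, Remark \ref{no-c5}, and the Kleidman--Liebeck tables, discarding those incompatible with hosting such an $s$ and identifying the survivors with (i)--(iii).

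First I would rule out entire classes. Irreducibility of $s$ rules out $\mathcal{C}_1$. Theorem \ref{main}(i) rules out $\mathcal{C}_4\cup\mathcal{C}_7$. Strongness of the $ppd$-property combined with Remark \ref{no-c5} rules out $\mathcal{C}_5$. For $\mathcal{C}_6$ (normaliser of an extraspecial $r$-group), $M$ has a specific small order with $2n$ a prime power; a direct comparison via Lemma \ref{aritme} (parts (i), (vi)--(vii)) shows $q^n+1\nmid |M|$ for $n\geq 5$.

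The remaining classes $\mathcal{C}_2,\mathcal{C}_3,\mathcal{C}_8,\mathcal{S}$ I would handle case by case. For $\mathcal{C}_2$, maximality of $M$ forces a decomposition $V=V_1\oplus\cdots\oplus V_k$; irreducibility of $s$ forces $s$ to permute the $V_i$ transitively. The two subtypes that arise are $Sp_{2a}(q)\wr S_t$ with $2n=2at$ and the ``$GL$-type'' subgroup $GL_n(q).2$ (when $V$ splits into two totally isotropic halves). In the first, the $t$-th power $s^t$ must act as a Singer on $V_1$, giving $(q^n+1)/\gcd(q^n+1,t)\mid q^a+1$, and Lemma \ref{aritme} shows this cannot happen for $n\geq 5$. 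In the second, elements of $GL_n(q).2$ have orders dividing $2(q^n-1)$, and $\gcd(q^n+1,2(q^n-1))=2(2,q-1)$, excluding $s$. For $\mathcal{C}_3$, the field-extension subgroups are precisely $Sp_{2n/k}(q^k).k$ with $k\mid n$ prime and, when $nq$ is odd, $GU_n(q^2).2$; both do contain a Singer of order $q^n+1$, yielding (i) and (iii). Class $\mathcal{C}_8$ is nonempty for $Sp_{2n}(q)$ only in even characteristic, contributing $O_{2n}^{\pm}(q)$; Table \ref{1} shows that only $O_{2n}^{-}(q)$ has a Singer cycle of order $q^n+1$, giving (ii).

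The hardest step is the class $\mathcal{S}$. Since $e=d=2n$, Theorem \ref{main}(ii) imposes no restriction, so I must sift through Examples 2.5--2.9 of \cite{gpps} in turn. The main obstacle here is that these Examples do admit some almost simple groups containing a $ppd(2n,q;2n)$-element; the way through is to compare the full order $q^n+1$ of $s$ (not just one prime divisor) against the orders of the candidate socles, using Lemma \ref{aritme} to show that a cyclic subgroup of order $q^n+1$ cannot embed except through one of the already-identified classes. A small number of sporadic numerical coincidences would need to be ruled out by direct inspection of maximal torus orders or character tables, but for $n\geq 5$ these are expected to all fail.
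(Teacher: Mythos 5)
You should first note that the paper does not prove Lemma~\ref{msw-sp} at all: it is quoted verbatim from \cite[Theorem~1.1]{msw}, so there is no internal argument to compare against, and what you have written is a from-scratch reconstruction of an imported result. Your overall strategy (observe that a Singer cycle is a strong $ppd(2n,q;2n)$-element, then run through Aschbacher's classes using Theorem~\ref{main}, Remark~\ref{no-c5} and order considerations) is exactly the machinery the paper deploys for its Bertrand-element lemmas, so the approach is natural and the treatment of the geometric classes is essentially sound. Two local imprecisions are worth flagging: in the wreath-product case of $\mathcal{C}_2$ the divisibility $(q^n+1)/(q^n+1,t)\mid q^a+1$ does not follow just from $s^t$ lying in $Sp_{2a}(q)^t$ (semisimple elements of $Sp_{2a}(q)$ can have order exceeding $q^a+1$), and in the $GL_n(q).2$ case element orders need not divide $2(q^n-1)$; in both cases the correct and cleaner argument is simply that a primitive prime divisor $q_{2n}$ cannot divide $t!\cdot|Sp_{2a}(q)|^t$ or $2|GL_n(q)|$, since $q_{2n}>2n$ and $q_{2n}\nmid q^{2i}-1$ for $i<n$.

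The genuine gap is the class $\mathcal{S}$. Because $e=d=2n$, none of the reductions in parts (ii)--(iii) of Theorem~\ref{main} are available, so all of Examples 2.5--2.9 of \cite{gpps} are live, and this is precisely the regime (alternating groups on $d+1$ or $d+2$ points, sporadic groups, cross-characteristic groups of Lie type such as $PSL_2(s)$ with $s$ odd and $d=(s\pm1)/2$, etc.) where nontrivial candidates genuinely occur. Your proposal reduces this step to the assertion that the ``sporadic numerical coincidences \ldots are expected to all fail,'' which is a plan rather than a proof: one must actually verify, case by case, that no almost simple group in those tables contains a cyclic subgroup of order $q^n+1$ acting irreducibly on a $2n$-dimensional symplectic space with $n\ge 5$ (compare the analogous but fully worked-out $\mathcal{S}$-analyses in the proofs of Lemma~\ref{bertrand-sp} and Lemma~\ref{max-y-}, which occupy the bulk of those arguments). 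Carrying this out is the substance of \cite[Theorem~1.1]{msw}, and it is the reason the paper cites that result rather than arguing it inline; as written, your proof is incomplete at exactly the decisive point.
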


In order to find the second component for a 2-covering, we consider
a Bertrand element $z\in G$ of order $\frac{(q^{t} +1)(q^{n-t}
+1)}{(q^{t} +1, q^{n-t} +1)}.$ Recall that when $n=6,$ by
definition, $t=4.$ It is clear that, for $(n,q)\neq (5,2),\ z$ is a
strong $ppd(\,2n,\,q;2t)$-element. The group $Sp_{10}(2)$ will be
examined separately.\\ Observe that for $(n-t,q)\neq (3,2)$ any
$q_{2(n-t)}\in P_{2(n-t)}(q)$ divides $|z|;$ when $(n-t,q)=(3,2)$,
we will say that $(n,t,q)$ belongs to the $critical\ case.$
\begin{lemma}\label{bertrand-sp} Let
$G=Sp_{2n}(q),$ with $n \ge 5$  and $(n,q)\neq (5,2).$ If $M$ is a
maximal subgroup of $G$ containing a Bertrand
 element then, up to conjugacy, one of the following holds:
\begin{itemize}
\item[i)] $M = Sp_{2t}(q) \bot Sp_{2n-2t}(q)$;

\item[ii)] $n$ is  even, $q$ is odd  and  $M = GU_{n}(q^{2}).2;$

\item[iii)] $q$ is even and $M \cong O^{+}_{2n}(q).$
\end{itemize}
\end{lemma}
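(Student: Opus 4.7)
The plan is to apply Theorem~\ref{main} to the Bertrand element $z$ and sift through the surviving Aschbacher classes together with Example~2.6~a) of \cite{gpps}. Since $t \le n-2$ one has $e = 2t \le 2n-4 = d-4$, so Theorem~\ref{main}(ii) reduces the possibilities for $M$ to Examples~2.1, 2.2, 2.3, 2.4 and~2.6~a); Theorem~\ref{main}(i) and Remark~\ref{no-c5} then rule out $\mathcal{C}_4$, $\mathcal{C}_5$ and $\mathcal{C}_7$. In each remaining case I would use two structural features of $z$: its rational canonical form gives the unique non-trivial decomposition $V = V_{2t} \oplus V_{2(n-t)}$ into $\mathbb{F}_q$-irreducible non-degenerate summands, and, outside the critical case $(n-t,q) = (3,2)$, the order $|z|$ is divisible by every prime in $P_{2t}(q) \cup P_{2(n-t)}(q)$.

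In $\mathcal{C}_1$ the only proper $z$-invariant subspaces are $V_{2t}$ and $V_{2(n-t)}$; their distinct dimensions prevent them from being paired into totally singular summands by the symplectic form, so each is non-degenerate and $M = Sp_{2t}(q) \bot Sp_{2n-2t}(q)$, yielding (i). In $\mathcal{C}_2$, divisibility of $|M|$ by some $q_{2t} \in P_{2t}(q)$ forces $2t \mid 2i$ for some $i \le n/m$ (isometric decomposition) or $2t \mid i$ for some $i \le n$ (the $GL_n(q).2$ case on a totally singular partition); both contradict $2t > n$, so $\mathcal{C}_2$ is empty.

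The most intricate case is $\mathcal{C}_3$. A $\mathcal{C}_3$-subgroup of type $G_{d/k}(q^k)$ with $k$ prime forces $k$ to divide every $\mathbb{F}_q$-irreducible factor degree of the characteristic polynomial of $z$, so $k \mid 2\gcd(t, n-t)$. For $n \ne 6$ the Bertrand prime $t$ is odd and coprime to $n$, so $\gcd(t, n-t)=1$ and $k=2$. For $M = Sp_n(q^2).2$, the $\mathbb{F}_{q^2}$-dimension of $V_{2t}$ is the odd integer $t$, so the symplectic form restricts degenerately to $V_{2t}$; by $z$-irreducibility over $\mathbb{F}_{q^2}$ the subspace $V_{2t}$ must be totally isotropic, contradicting $t > n/2$. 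For $M = GU_n(q^2).2$, the restrictions of $z$ to $V_{2t}$ and $V_{2(n-t)}$ have to be Singer cycles of $GU_t(q^2)$ and $GU_{n-t}(q^2)$ of orders $q^t+1$ and $q^{n-t}+1$; by Table~\ref{1} these Singer-cycle orders only arise when the unitary dimension is odd, forcing $n-t$ odd and hence $n$ even, and this $\mathcal{C}_3$-type requires $q$ odd, giving (ii). The exceptional case $n=6$ (with $t=4$ not prime, so $\gcd(t,n-t)=2$) must be treated separately since the coprimality argument breaks down.

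For $\mathcal{C}_8$, non-trivial only in characteristic~$2$, we have $M = O^{\epsilon}_{2n}(q)$; Table~\ref{1} shows that each non-degenerate summand $V_{2t}$, $V_{2(n-t)}$ must carry a Singer cycle of minus type, so $\epsilon = (-)(-) = +$, giving~(iii). Example~2.6~a) of \cite{gpps} is dispatched by a direct order-and-structure check: none of the listed almost simple candidates possess a semisimple element with rational canonical decomposition of type $2t \oplus 2(n-t)$ and the two prescribed Singer-cycle orders. The critical case $(n-t,q)=(3,2)$ is handled by rerunning each step using only the strong $ppd(2n,q;2t)$-property, compensating the loss of $P_{2(n-t)}(q)$-divisibility with explicit order inequalities on the low-dimensional summand. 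The main obstacle is the $\mathcal{C}_3$ analysis: correctly descending the $\mathbb{F}_q$-decomposition of $V$ to $\mathbb{F}_{q^2}$ and separating $Sp_n(q^2).2$ from $GU_n(q^2).2$ demands both a form-theoretic parity argument and a precise Singer-cycle order match via Table~\ref{1}, and the exceptional $n=6$ configuration requires explicit verification.
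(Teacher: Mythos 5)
Your skeleton is the paper's: reduce via Theorem \ref{main} and Remark \ref{no-c5} to $\mathcal{C}_1$, $\mathcal{C}_2$, $\mathcal{C}_3$, $\mathcal{C}_8$ and Example 2.6 a) of \cite{gpps}, then sift class by class. Where you differ, your arguments are more geometric than the paper's (which works almost entirely with primitive-prime-divisor divisibility): your observation that the only proper $z$-invariant subspaces are $V_{2t}$ and $V_{2(n-t)}$, each forced to be non-degenerate, kills the parabolics and pins down the $\mathcal{C}_1$ case uniformly (including the critical case, where the paper needs ad hoc order checks for $P_1,P_2,P_3$), and your radical/parity argument against $Sp_n(q^2).2$ is a clean substitute for the paper's divisibility count. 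These parts are sound.

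There are, however, two genuine gaps. First, the class $\mathcal{S}$ is not "dispatched by a direct order-and-structure check": in Example 2.6 a) one has $Alt(m)\le M\le Sym(m)\times Z(G)$ with $m\in\{2n+1,2n+2\}$ and $q$ unbounded, so there is no finite list to inspect. This is the longest part of the paper's proof: using Lemma \ref{aritme} one produces an odd prime $r$ with $r\cdot q_{2t}\cdot q_{2(n-t)}$ dividing $|z|$ and $r\neq q_{2(n-t)}$, deduces $m\ge r+q_{2t}+q_{2(n-t)}\ge 2n+5$ (or $m\ge q_{2(n-t)}+q_{2t}^2$ if $r=q_{2t}$), and then handles $(t,q)=(5,2)$ and $n=5$ separately via a centralizer computation in $Sym(12)$. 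You supply none of this. Second, the $n=6$ subcase of $\mathcal{C}_3$, which you explicitly leave open, cannot be waved through: for $n=6$, $t=4$, the subgroup $Sp_4(q^2)\bot Sp_2(q^2)\le Sp_6(q^2)$ contains an element of order $\mathrm{lcm}(q^4+1,q^2+1)=|z|$ whose eigenvalues have $\F_q$-degrees $8$ and $4$, i.e.\ an element with exactly the order and $8\oplus 4$ action of the Bertrand element; so the maximal $\mathcal{C}_3$-subgroup $Sp_6(q^2).2$ is not excluded by your coprimality argument (nor, in fact, by the paper's divisibility argument, which tacitly uses that $t$ is prime). This case needs an actual resolution, not an acknowledgment. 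Finally, the critical case $(n-t,q)=(3,2)$ is only promised; the paper executes it with explicit order computations such as $|z|=3(2^t+1)$ not being realized in $Sp_2(2)\bot Sp_{2n-2}(2)$ or $Sp_4(2)\bot Sp_{2n-4}(2)$.
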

\begin{proof}
Let $M<\cdot\ G=Sp_{2n}(q),$ containing a Bertrand element $z.$
Since $2t\leq 2n-4,$ by Theorem \ref{main} and Remark \ref{no-c5},
$M$ belongs to one of the classes $\mathcal C_i$, $i=1,2,3,8$ or to
$\mathcal S$ as described in Example 2.6 a) of \cite{gpps}.

\vskip 3pt

\noindent $\mathcal{C}_1$. \ \ \ Suppose first that $(n,t,q)$ does
not belong to the critical cases.  Then $q_{2t} \cdot q_{2(n-t)}$
 divides the order of an element of $M$. Suppose that $M$ is
of type $P_m$ with $1 \le m \le n$. Then, by Proposition 4.1.19 of
\cite{kl}, we have $P_m \cong q^a: GL_{m}(q) \times Sp_{2n
-2m}(q)$ for some $a \in \N$. This implies  $n-m \ge t$ and  $m
\ge 2n -2t$, which  gives $n \le t$ against the definition of $t.$
Now suppose that $(n,t,q)$ belongs to the critical cases. Then
$n\neq 6$ and we have again $n-m \ge t$, which gives $m \le 3$.
But, since $3\mid (2^t+1),$ there is no element of order
$|z|=3(2^t+1)$ in $P_{1}$, $P_{2}$ or $P_{3}$.

Suppose now that $M$ is of type $Sp_{2m}(q) \bot Sp_{2n-2m}(q)$,
with $ 1 \le  m < n$ and  $(n-t,q) \ne (3,2)$. Then  $n-m \ge t$ and
$n -t\le m$, which gives $m=n-t$. If $t=n-3$ and $q=2$, we also have
$m \le 3.$ On the other hand $|z|=3(2^t+1)$ cannot be the order of
an element in $Sp_2(2) \bot S_{2n-2}( 2)$ or in $ Sp_4(2) \bot
Sp_{2n-4}( 2)$ and we are left only with $Sp_6(2) \bot Sp_{2t}( 2)$.

\vskip 3pt \noindent $\mathcal{C}_2$. \ \ \ By definition of the
$\mathcal{C}_2$ class of the symplectic
   group, $M\cong Sp_m(q) \wr Sym(2n/m)$, preserves a direct sum decomposition
   $V=V_1\oplus\cdots\oplus V_k$ where each subspace $V_i$ has even dimension $m.$
    On the other hand, $M$ is described in
   Example 2.3 of \cite{gpps}: in particular, by Lemma 4.1 in
   \cite{gpps}, $m=1.$ Hence no case arises.

 \vskip 3pt \noindent
$\mathcal{C}_3$. \ \ \ If $M \cong Sp_{2n/r}(q^{r}).r$, where
$r\mid n$ is a prime, then $q_{2t}$ cannot divide $|M|.$ Namely
$$\pi(|M|)=\pi\bigg (p\, r \, \prod_{i=0}^{n/r}(q^{2ri} - 1) \cdot
\bigg )$$  and if  $q_{2t}$ would divide $q^{ri} \pm 1$,  we
should have $t \le i\le n/r$ against $t >n/2;$ moreover $q_{2t} >
n \ge r$ implies $q_{2t} \ne r$.

If $M \cong GU_{n}(q^{2}).2$ , then $z$ belongs to $M$ if and only
if $n$ is even.

\vskip 3pt \noindent $\mathcal{C}_8$. \ \ \ Here $M=
O_{2n}^{\pm}(q)$, with $q$ even  and
only $O_{2n}^{+}(q)$ contains an element of order divisible by
$q_{2t}\,q_{2(n-t)}.$

\vskip 4pt \noindent $ \mathcal{S}.$ \ \ \ In the Example 2.6 a)
of \cite{gpps} we found $M \le Sym(m) \times Z(G)$ with $m\in\{
 2n+1,\ 2n+2\}$ and $q_{2t}=2t+1.$ Assume $n \ge 7,$ then $t \ge 5$ and first suppose
that $(n-t,q)=(3,2)$. Then there exists in $M$ a cyclic subgroup of
order $2_{2t}\,9 $, against the fact that $2t +1 +9= 2n +4 > m$.
Next let $(n-t, q) \ne (3,2)$: then there exists a primitive prime
divisor $q_{2(n -t)}$ of $q^{2(n-t)} -1$ and $q_{2(n -t)} \ge 2n -2t
+1.$ We observe that if $(t,q) \ne (5,2)$, then $(q^t +1)/(q+1) > 2t
+1=q_{2t}.$ Since, by Lemma \ref{aritme},  $(q^t +1)/(q+1)$ is odd,
there exists an odd prime $r$ such that $r \cdot q_{2t}$ divides
$(q^t +1)/(q+1).$ Moreover, by Lemma \ref{aritme}, ${(q^{n-t} +1,
\frac{q^t +1}{q+1})}$ divides $(t, q+1)$ which together with
$n-t\geq 2,$ gives $r\neq q_{2(n-t)}.$ Thus if $r \ne q_{2t}$, we
have that $r \cdot q_{2t} \cdot q_{2(n -t)}$ divides $|z|$ which
requires an element  of order the product of these three primes in
 $Sym(m),$ and therefore  $m\geq r + q_{2t} + q_{2(n -t)}
  \ge 3 + 2n -2t +1 + 2t+1 = 2n+5,$ against $m\leq 2n+2.$ If $r=q_{2t}$ we have an element of order
$q_{2(n-t)}\,q_{2t}^2$ in $Sym(m),$ which implies the impossible
relation $m \geq q_{2(n-t)}+q_{2t}^2> 2n+2.$

In the case $(t,q)=(5,2),$  we have $n\in \{7,\ 8,\ 9\}$, but it
is easily checked that the corresponding $Sym(m)$ do not contain
elements of order $|z|$.

Now assume  $n=5$ and $q\ne 2$. Then $t=3,$ $q_{2t}=7$, and $m=11$
or $12$. Let $\sigma$ be an element of $M$ of order
 7: then $(q^2 +1)/(2,q-1)$ divides $|C_M(\sigma)|,$ hence
$q_4$
 divides the order of $C_{Sym(m)\times Z(G)}(\sigma)\cong Sym(m-7)
 \times C_7 \times Z(G).$ Since $7\neq q_4\geq 5 ,$ we get
  $m=12,\ q_4=5.$ Thus we would have an element $z$ of order divisible
   by  $35$ in $Sym(12),$ which implies the impossible relation $|z|=\frac{(q^3+1)(q^2+1)}
 {(2,q-1)}=35.$  \\
 Finally observe that if $n=6,$ then $t=4$ and no case arises
  since
  $2t+1=9$ is not a prime.
   \end{proof}

\begin{remark}\label{$Sp_10(2)$} If $\{H,\ K\}$ is a 2-covering of
$Sp_{10}(2),$ then $$H= O^-_{10}(2)\ \ \ and \ \ \ K=
O^{+}_{10}(2). $$

\begin{proof} Let $\{H,\ K\}$ be a 2-covering of $G=Sp_{10}(2),$
with $H$ containing a Singer cycle of order 33. By Lemma
\ref{msw-sp}, we get $$H\in \{Sp_2(2^5).5,\ O^{-}_{10}(2)\}.$$ Let
$y\in G$  of order $17\cdot 3$ and action of type $8\oplus 2.$ Then
$y$ is a $ppd(\,d,q;e\,)$-element for $d=10,\ q=2,\ e=8.$ Observe
that $e=d-2,\ q_8=2e+1=17.$ The maximal subgroups $M$ of $G$
containing $y$ belong to one of the classes $\mathcal C_i$,
$i=1,\,2,\,3,\,6,\,8$ or to $\mathcal S$ and are described closely
in the Examples of \cite{gpps}. Since $y$ has no eigenvalue, the
only $M\in \mathcal C_1,$ is the natural $M=Sp_2(2)\bot Sp_8(2);$ no
case arises in classes $\mathcal C_2,\ \mathcal C_6,$ since the
corresponding Examples 2.3 and 2.5 in \cite{gpps} require $q_e=e+1;$
we get no case also in $\mathcal C_3$ by arithmetical reasons and
finally in $\mathcal C_8,$ due to the action of $y,$ we get only
$M=O^{+}_{10}(2).$ On the other hand the examination of the Examples
2.6-2.9 in \cite{gpps} for the class $\mathcal S,$ easily show that
there is no possibility for $M$ in $\mathcal S.$ Thus we reach
$$K\in\{Sp_2(2)\bot Sp_8(2),\ O^{+}_{10}(2)\}.$$ Let $z\in G$ be a
Bertrand element of order 45 and observe that its action is of type
$6\oplus 4.$ The only subgroup among our candidates $H$ and $K$
which contain $z$ is $O^{+}_{10}(2).$ Moreover $O^{+}_{10}(2)$ and
$Sp_2(2^5).5$ cannot constitute the components of a 2-covering,
since $G$ contains elements of order 35 which none of them contains.
Thus we are left with the only possibility $H= O^-_{10}(2)$ and $ K=
O^{+}_{10}(2). $
\end{proof}
\end{remark}
In \cite{Dye},R. H. Dye showed that in even characteristic the
symplectic group admit always a 2-covering:\\
\begin{thm}\cite{Dye}\label{Dye}
 The group $Sp_{2n}(2^f)$ is 2-coverable by
$$H= O^-_{2n}(q)\quad\hbox{and }\quad K= O^{+}_{2n}(q).$$
\end{thm}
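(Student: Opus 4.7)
The plan is to translate the covering statement into a statement about $g$-invariant quadratic forms, and then exploit the $\F_q[g]$-module structure of the natural representation. The first step is the standard observation that in characteristic $2$ the inclusion $O^\epsilon_{2n}(q)\leq Sp_{2n}(q)$ arises because $O^\epsilon_{2n}(q)$ is the stabilizer in $Sp(V)$ of a non-singular quadratic form $Q^\epsilon$ on $V=\F_q^{2n}$ whose polar form $b_Q(v,w)=Q(v+w)-Q(v)-Q(w)$ coincides with the given symplectic form $f$. The set $\mathcal{Q}$ of all quadratic forms on $V$ with polar form $f$ is a torsor of cardinality $q^{2n}$ under the $\F_q$-space $\mathcal{N}$ of quadratic forms with polar form zero, and $Sp(V)$ has exactly two orbits on $\mathcal{Q}$, of types $+$ and $-$. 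Hence it suffices to prove that every $g\in Sp_{2n}(q)$ stabilizes at least one element of $\mathcal{Q}$.

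Next I would fix $Q_0\in\mathcal{Q}$ and parametrise $\mathcal{Q}$ as $\{Q_0+L : L\in\mathcal{N}\}$. A short calculation using $g\in Sp(V)$ shows that $\phi_g(v):=Q_0(gv)+Q_0(v)$ is itself in $\mathcal{N}$, and that $Q_0+L$ is $g$-invariant precisely when $L\circ g - L = \phi_g$ inside $\mathcal{N}$. So the theorem reduces to showing that, for every $g\in Sp_{2n}(q)$, the element $\phi_g$ lies in the image of the $\F_q$-linear endomorphism $L\mapsto L\circ g - L$ of $\mathcal{N}$.

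To handle this, I would decompose $V$ as an $\F_q[g]$-module. Because $g$ preserves the non-degenerate form $f$, one may arrange the decomposition so that the indecomposable summands fall into two classes: hyperbolic pairs $U\oplus U^{\ast}$, with $U^{\ast}$ the $\F_q[g]$-dual of $U$ paired non-degenerately with $U$ by $f$; and self-dual blocks $W$ on which $f|_W$ is non-degenerate. On each hyperbolic pair the unique quadratic form that vanishes on both $U$ and $U^{\ast}$ and has polar form $f|_{U\oplus U^{\ast}}$ is automatically $g$-invariant, and the solvability condition on $\mathcal{N}$ becomes trivial on such a summand. Taking orthogonal sums reduces the problem to producing a $g$-invariant quadratic form on each self-dual block.

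The main obstacle is the self-dual unipotent case, where $g-1$ acts nilpotently and the solvability of $L\circ g-L=\phi_g$ on the block is not automatic. Here I would appeal to the classification, due to G.\,E.~Wall, of indecomposable self-dual symplectic $\F_q[g]$-modules in characteristic two: each such block is parameterised by an irreducible self-reciprocal polynomial together with a prescribed Jordan datum, and in each case one can write down an explicit quadratic form on the block with the correct polar form that is preserved by $g$. Assembling all of these pieces orthogonally delivers a global $g$-invariant $Q\in\mathcal{Q}$, placing $g$ in $[O^+_{2n}(q)]\cup[O^-_{2n}(q)]$ and completing the proof.
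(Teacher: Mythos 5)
You should first be aware that the paper contains no proof of this statement: Theorem \ref{Dye} is quoted from Dye's article \cite{Dye}, so there is no internal argument to compare yours with, and your proposal has to stand on its own. Up to a point it does. The reduction is correct and standard: in characteristic $2$ the conjugates of $O^{\pm}_{2n}(q)$ in $Sp_{2n}(q)$ are precisely the stabilizers of the $q^{2n}$ quadratic forms polarizing to $f$; these forms are a torsor under the group $\mathcal{N}\cong V^{*}$ of forms with zero polarization; and $g$ lies in a conjugate of $O^{+}_{2n}(q)$ or of $O^{-}_{2n}(q)$ if and only if $\phi_g=Q_0\circ g-Q_0$ is a coboundary for the endomorphism $L\mapsto L\circ g-L$. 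The hyperbolic-pair step is also sound: $Q(u+\xi)=f(u,\xi)$ is $g$-invariant with the right polarization, and invariant forms on orthogonal summands add.

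The gap is your final paragraph. The assertion that every orthogonally indecomposable self-dual block $W$ carries a $g$-invariant quadratic form polarizing to $f|_{W}$ \emph{is} Dye's theorem for that block: Wall's classification parametrizes the blocks, but it does not hand you the invariant form, and for the blocks on which $g$ acts as a $2$-element (a single Jordan block of even size, or an indecomposable non-hyperbolic block of type $(m,m)$) the existence of a solution of $L\circ g-L=\phi_g$ is exactly the nontrivial content. That this is not a formality is visible already in $Sp_4(2)$: the elements of Jordan type $(4)$ form two symplectic classes, one meeting only $O^{-}_4(2)$ and one meeting only $O^{+}_4(2)$, so the invariant form on a block depends on the isometry type of the restricted symplectic form and must be exhibited case by case. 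Until you either carry out that construction or replace it by another argument — for instance, observe that the obstruction lies in $H^{1}(\langle g\rangle,\mathcal{N})$ with $\mathcal{N}$ a $2$-group, so that by restriction--corestriction only the unipotent part of $g$ matters, and then induct on the Jordan structure of a unipotent isometry — the proof is incomplete: you have correctly isolated the difficulty but not resolved it.
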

\begin{prop}\label{simplettici}
Let  $G=Sp_{2n}( q))$, $n\ge 5$.
\begin{itemize}
\item[i)]  If $q$ is odd, then $G$ is not 2-coverable; \item[ii)]
if $q$ is even, then the only  2-covering  $\{H,K \}$ of $G$ is
given by
$$H= O^-_{2n}(q)\ \ \ and \ \ \ K= O^{+}_{2n}(q). \ \ \
    $$
\end{itemize}
\end{prop}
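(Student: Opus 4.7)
Suppose $\delta=\{H,K\}$ is a 2-covering of $G=Sp_{2n}(q)$; the exceptional small case $(n,q)=(5,2)$ is handled in Remark \ref{$Sp_10(2)$}, so assume $(n,q)\neq(5,2)$. A Singer cycle $s\in G$ is $G$-conjugate into $H$ or $K$, so after relabelling $s\in H$, and Lemma \ref{msw-sp} forces $H$ up to conjugacy to be $Sp_{2n/k}(q^k).k$ (with $k\mid n$ prime), $O^{-}_{2n}(q)$ (only for $q$ even), or $GU_n(q^2).2$ (only for $nq$ odd). A Bertrand element $z\in G$ must lie in $[H]\cup [K]$, but a direct comparison shows that no isomorphism type in Lemma \ref{msw-sp} appears in Lemma \ref{bertrand-sp}: the groups $Sp_{2n/k}(q^k).k$ and $O^{-}_{2n}(q)$ are absent from \ref{bertrand-sp}, and the two occurrences of $GU_n(q^2).2$ have incompatible parities of $n$. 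Hence $z\notin [H]$, and Lemma \ref{bertrand-sp} forces $K$ up to conjugacy to be $Sp_{2t}(q)\bot Sp_{2n-2t}(q)$, $GU_n(q^2).2$ (only for $n$ even and $q$ odd), or $O^{+}_{2n}(q)$ (only for $q$ even).

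This leaves a short list of candidate pairs, and for each pair other than the Dye pair $(O^{-}_{2n}(q),O^{+}_{2n}(q))$ I would exhibit a witness element $\tau\in G$ with $\tau\notin [H]\cup [K]$. The first natural witness is the linear Singer cycle of dimension $2n$ (order $q^n-1$, action of type $n\oplus n$ on $V$) coming from the diagonal embedding $GL_n(q)\hookrightarrow Sp_{2n}(q)$. Its invariant-subspace structure forbids it from any $Sp_{2t}(q)\bot Sp_{2n-2t}(q)$ (no $2t$-dimensional invariant subspace, since $t>n/2$); the Witt index $n-1$ of $O^{-}_{2n}(q)$ forbids it from $O^{-}_{2n}(q)$; and an order comparison via Lemma \ref{aritme} forbids it from $GU_n(q^2).2$. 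Hence the linear Singer cycle eliminates every candidate pair with $H\in\{O^{-}_{2n}(q), GU_n(q^2).2\}$ (apart from the Dye pair, which only exists for $q$ even). However, since the linear Singer cycle lies both in $O^{+}_{2n}(q)$ and in every $Sp_{2n/k}(q^k).k$, it fails for the pairs with $H=Sp_{2n/k}(q^k).k$.

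For those remaining pairs a secondary witness is needed; a natural choice is an element $\tau$ with action of type $2(n-1)\oplus 2$, arising from embedding a Singer cycle of $Sp_{2(n-1)}(q)$ (or of $O^{-}_{2(n-1)}(q)$ when $q$ is even) into $Sp_{2(n-1)}(q)\bot Sp_{2}(q)\hookrightarrow Sp_{2n}(q)$. For odd primes $k\mid n$ the field-extension constraint $k\mid 2$ cannot hold, so $\tau\notin Sp_{2n/k}(q^k).k$; the case $k=2$ with $n$ even requires refining $\tau$ so that its $\F_q$-invariant subspaces are not $\F_{q^2}$-invariant, which can be arranged via Lemma \ref{aritme}. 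One then checks $\tau\notin K$ for each possible $K$ by choosing the orthogonal type of the $2(n-1)$-piece appropriately and exploiting the parity constraints inherited from Lemma \ref{bertrand-sp}. Once every non-Dye pair is eliminated, part (i) follows because the Dye pair is not among the candidates when $q$ is odd, and part (ii) follows because the only surviving pair is the Dye pair, which is indeed a 2-covering by Theorem \ref{Dye}. The principal obstacle throughout is the uniform treatment of $H=Sp_{2n/k}(q^k).k$; the case $k=2$ with $n$ even is the most delicate, requiring careful use of Lemma \ref{aritme}.
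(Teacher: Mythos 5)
Your skeleton is the paper's: pin $H$ via the Singer cycle (Lemma \ref{msw-sp}), pin $K$ via the Bertrand element (Lemma \ref{bertrand-sp}) using the non-overlap of the two lists, then kill every non-Dye pair with witness elements. The first witness is sound where you actually need it: the linear Singer cycle's only invariant subspaces are the two dual totally isotropic $n$-spaces, which excludes $Sp_{2t}(q)\bot Sp_{2n-2t}(q)$ and (by the singularity argument in characteristic $2$) $O^-_{2n}(q)$, and the order argument against $GU_n(q^2).2$ works precisely because that candidate for $H$ requires $n$ odd. But there is a genuine gap in the second stage. Your element $\tau$ of type $2(n-1)\oplus 2$ and order $\frac{(q^{n-1}+1)(q+1)}{(q^{n-1}+1,\,q+1)}$ \emph{does} lie in both of the problematic candidates for $K$: in $O^+_{2n}(q)$ ($q$ even) via the natural subgroup $O^-_2(q)\bot O^-_{2(n-1)}(q)$ (note $(-)\cdot(-)=+$; compare Lemma \ref{msw+}(i)), and in $GU_n(q^2).2$ ($n$ even, $q$ odd) via the maximal torus of type $(n-1,1)$ of order $(q^{n-1}+1)(q+1)$. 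So $\tau$ does not eliminate the pairs $\{Sp_{2n/k}(q^k).k,\ O^+_{2n}(q)\}$ and $\{Sp_{2n/k}(q^k).k,\ GU_n(q^2).2\}$, and these are exactly the pairs the paper works hardest on. Your proposed repair, ``choosing the orthogonal type of the $2(n-1)$-piece appropriately,'' cannot succeed: the quadratic type of an irreducibly-acted-on $2(n-1)$-space is forced to be minus, and for the unitary candidate orthogonal type is not the relevant invariant at all.

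What is actually needed there is a genuinely different third witness, which is what the paper supplies: an element of order $q^{n-1}-1$ kills $\{Sp_{2n/k}(q^k).k,\ GU_n(q^2).2\}$ (here $n-1$ is odd, so $q_{n-1}$ divides neither order), and for $q$ even an element $u$ of order $(q^2+1)(q^{n-2}-1)$ ($n$ odd) or $(q^{n-1}-1)(q+1)$ ($n$ even) kills $\{Sp_{2n/k}(q^k).k,\ O^+_{2n}(q)\}$. A secondary remark: your worry that $k=2$ is the delicate case for $H=Sp_{2n/k}(q^k).k$ is misplaced --- since $(n-1,k)=1$ for every prime $k\mid n$, the primitive prime divisor $q_{2(n-1)}$ of $|\tau|$ fails to divide $|Sp_{2n/k}(q^k).k|$ uniformly in $k$; the real difficulty sits with the candidates for $K$, not with $H$.
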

\begin{proof} By Remark \ref{$Sp_10(2)$} and Theorem \ref{Dye}, the result is
 true if $(n,q)= (5,2).$
Let  $n \ge 5$ with $(n,q)\neq (5,2)$ and  $\{H,\ K \}$ be a
2-covering of $G=Sp_{2n}( q)$.

We can assume that $H$ contains a Singer cycle and therefore it
 is described in Lemma \ref{msw-sp}. On the other hand, the maximal
  subgroups of $G$
containing a Bertrand element are described in Lemma
\ref{bertrand-sp} and, since there is no overlap between these two
lists, we can assume $K$ as described there. Thus we have two
choices for $H$ and two choices for $K$ both if $q$ is odd and if
$q$ is even. Let $y\in G$  of order $\frac{(q^{n-1}
+1)(q+1)}{(q^{n-1} +1,q+1)}$  and action of type $2(n-1)\oplus 2.$\\
Suppose first that $q$ is odd. Then $|y|$ does not divide $|H|$.
Moreover  $|y|$ does not divide $|Sp_{2t}(q) \bot Sp_{2n-2t}(q)|.$
Hence if $n$ is odd we have finished. If $n$ is even, the only
possibility to contain $y$ is given by the choice $K=
GU_{n}(q^{2}).2.$ But it is easily observed that an element of
order $q^{n-1} -1$ is not contained neither in $H=Sp(n/k,q^k).k$
nor in $K=GU_{n}(q^{2})$.
  Thus we get no 2-covering when $q$ is odd.\\ Now
suppose that $q$ is even. Then $K =O_{2n}^+(q),$ since no other
 candidate component can contain elements with the
order and action of $y.$ Thus we have two possible coverings given
by $$\delta_1=\{H=O_{2n}^-(q),\ \ K =O_{2n}^+(q)\}$$ and
$$\delta_2=\{H=Sp_{2n/k}(q^k).k,\ \  K =O_{2n}^+(q)\},$$ where $k|n$ is a
prime. Finally we see that $\delta_2$ is never a 2-covering.

We first suppose that $n$ is odd. Then  $(q^{2} +1,q^{n-2}-1)=1$,
since $n-2$ is odd and $q$ is even. Then there exists an element
$u\in G$ of order $(q^{2} +1)(q^{n-2}-1)$ and $q_{n-2}$ divides
$|u|$. Since $n-2$ divides $q_{n-2}-1$ we get $q_{n-2}> n \ge k,$
hence $q_{n-2}$ does not divide $|Sp_{2n/k}(q^k).k|.$ Moreover $u$ cannot belong to $O_{2n}^+(q).$

Now suppose that $n$ is even. Then $(q^{n-1} -1,q +1)=1,$ since
$q$ is even and $n-1$ is odd. Then there exists $u\in G$ of order
$(q^{n-1}-1)(q +1)$ and $q_{n-1}$ divides $|u|$. But $q_{n-1}>n$
does not divide $|Sp_{2n/k}(q^k).k|$  and there is no element of order $|u|$ in
$O_{2n}^+(q).$
\end{proof}

\section{Unitary groups}

Let $G=SU_{n}(q)\ $ with $n\geq 5,\ n\neq 6$
  and $q=q_0^2.$
  Let
$s\in G$ be a Singer cycle of order  $(q_0^{n}+1)/(q_0+1)$ if $n$ is
odd and a low-Singer cycle of maximal rank of order $q_0^{n-1}+1$ if
$n$ is even.

\begin{lemma}\cite[Theorem 1.1]{msw}\label {malleu}
Let $G=SU_n(q)$, $n \ge 5,\ n\neq 6.$ If $M$ is a maximal subgroup
of $G$ containing $s$, then one of the following holds:
\begin{itemize}
     \item[i)] $n$ is odd, with $(n,q_0)\neq (5,2)$ and  $$M
= N_G(GU_{n/k} (q^k))\cong
SU_{n/k}(q^k).\,\frac{q_0^k+1}{q_0+1}\,.\,k,$$ with  $\ k|n$
prime; \item[ii)] $n$ is  even and  $M = GU_{n-1}(q);$
 \item[iii)] $(M,G)= (PSL_2(11),\,SU_5(4)).$
 \end{itemize}
 \end{lemma}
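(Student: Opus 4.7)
The plan is to exploit the fact that $s$ is a $ppd$-element of very large order and apply Theorem \ref{main} to cut the Aschbacher class of $M$ down to a short list, then eliminate or confirm each remaining candidate by direct order and action arguments.

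First I would record the $ppd$-parameters of $s$. When $n$ is odd, $s$ acts irreducibly on $V=\f{}^n$ with $q=q_0^2$ and $|s|=(q_0^n+1)/(q_0+1)$; every $q_{2n}\in P_{2n}(q_0)$ divides $|s|$ and, by Remark \ref{primitivi}, sits inside $P_n(q)$, so $s$ is a strong $ppd(n,q;n)$-element of $GL_n(q)$. When $n$ is even, $s$ is a low-Singer cycle of maximal rank: it stabilizes a nondegenerate line and acts irreducibly on its orthogonal complement of dimension $n-1$, and $|s|=q_0^{n-1}+1$ is divisible by any $q_{2(n-1)}\in P_{2(n-1)}(q_0)\subseteq P_{n-1}(q)$, so $s$ is a $ppd(n,q;n-1)$-element.

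Next I would feed $s$ into Theorem \ref{main}. The classes $\mathcal{C}_4\cup\mathcal{C}_7$ are ruled out by part i) of that theorem, and Remark \ref{no-c5} excludes $\mathcal{C}_5$ whenever the $ppd$-element is strong (so certainly for $n$ odd). For $\mathcal{C}_1$ the irreducibility of $s$ leaves, in the $n$ odd case, no option, while in the $n$ even case the action type $(n-1)\oplus 1$ together with the order of $s$ forces the stabilizer to be the orthogonal-sum subgroup $GU_{n-1}(q)\bot GU_1(q)$, whose $G$-maximal part is $GU_{n-1}(q)$, giving case (ii). Class $\mathcal{C}_2$ is excluded because no wreath product of smaller unitary groups can accommodate $|s|$; class $\mathcal{C}_3$ leaves only the field-extension subgroups $GU_{n/k}(q^k)$ with $k\mid n$ a prime, and computing $N_G(GU_{n/k}(q^k))$ yields $SU_{n/k}(q^k).\tfrac{q_0^k+1}{q_0+1}.k$, which is case (i); the classes $\mathcal{C}_6$ and $\mathcal{C}_8$ are killed by comparing $|s|$ with the orders of the corresponding subgroups.

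The last and genuinely delicate step is the class $\mathcal{S}$. Here one must walk through Examples 2.6--2.9 of \cite{gpps}, checking for each almost simple $ppd(n,q;e)$-candidate whether $|s|=(q_0^n+1)/(q_0+1)$ (or $q_0^{n-1}+1$) divides $|M|$ and whether $M$ can actually be embedded in $G$ unitarily. The arithmetic of Lemma \ref{aritme}, combined with the size of the primitive prime divisor forced to appear in $|M|$, eliminates every candidate but one: the exceptional inclusion $PSL_2(11)\hookrightarrow SU_5(4)$, which accounts for case (iii) and simultaneously forces the exclusion $(n,q_0)\neq(5,2)$ from case (i). The hard part of the whole argument is precisely this $\mathcal{S}$-class sweep, because each sporadic or alternating candidate demands its own divisibility check against a number-theoretic expression of the shape $(q_0^n+1)/(q_0+1)$; a secondary nuisance is bookkeeping the outer automorphism group of $SU_{n/k}(q^k)$ in order to pin down the exact extension structure $SU_{n/k}(q^k).\tfrac{q_0^k+1}{q_0+1}.k$ claimed in (i).
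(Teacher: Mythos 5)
The first thing to note is that the paper does not prove this statement at all: it is imported verbatim as \cite[Theorem 1.1]{msw} (Malle--Saxl--Weigel), so there is no internal proof to compare yours against. Your sketch does follow the general strategy that \cite{msw} and the rest of this paper use (identify the $ppd$-parameters of $s$, invoke the Guralnick--Penttila--Praeger--Saxl classification, then eliminate candidates class by class), so the \emph{plan} is the right one. But as a proof it has concrete gaps. First, your claim that for $n$ odd $s$ is a \emph{strong} $ppd(n,q;n)$-element is false in general: $P_n(q)$ contains not only the primes of $P_{2n}(q_0)$ but also those of $P_n(q_0)$ (for $n$ odd a prime with $\mathrm{ord}_\ell(q_0)=n$ also has $\mathrm{ord}_\ell(q_0^2)=n$), and the latter divide $q_0^n-1$, hence do not divide $|s|=(q_0^n+1)/(q_0+1)$. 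A concrete instance is $n=5$, $q_0=3$: here $11\in P_5(9)$ but $|s|=61$. So Remark \ref{no-c5} cannot be invoked the way you invoke it, and $\mathcal{C}_5$ needs a direct order/torus argument. Second, the reduction you hope to get from Theorem \ref{main} ii)--iii) is unavailable here: for $n$ odd $e=d$ and for $n$ even $e=d-1$, both well above $d-4$, so \emph{all} of Examples 2.1--2.9 remain in play; the situation is genuinely harder than for the Bertrand elements treated in Lemma \ref{uni-z}, where $e\le d-3$ does most of the work.

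The more serious issue is that the part you yourself identify as ``the hard part'' --- the sweep through the class $\mathcal{S}$ (Examples 2.6--2.9 of \cite{gpps}), where the exceptional pair $(PSL_2(11),SU_5(4))$ emerges and where the exclusion $(n,q_0)\neq(5,2)$ in case (i) is forced --- is left entirely as a promissory note (``the arithmetic \dots eliminates every candidate but one''). That elimination requires, for each alternating, sporadic, and Lie-type candidate, an explicit divisibility check of $(q_0^n+1)/(q_0+1)$ (resp. $q_0^{n-1}+1$) against the relevant centralizer and automorphism-group orders, exactly as the paper carries out in the analogous proofs of Lemmas \ref{bertrand-sp}, \ref{uni-z} and \ref{max-y-}. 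Without those computations the statement's most distinctive content is unproven. In short: your outline is faithful to the method behind \cite[Theorem 1.1]{msw}, but it is an outline with one incorrect step (the strongness claim) and the decisive case analysis unexecuted; the appropriate ``proof'' in the context of this paper is simply the citation.
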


\begin{lemma}\label{$SU_5(4)$} The group $SU_5(4)$ is not $2$-coverable.
\end{lemma}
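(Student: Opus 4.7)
By Lemma~\ref{malleu}(iii), the only maximal subgroup (up to conjugacy) of $G = SU_5(4)$ containing a Singer cycle of order $(2^5+1)/(2+1) = 11$ (Table~\ref{1}) is $M \cong PSL_2(11)$. Thus in any putative $2$-covering $\{H, K\}$ we may assume $H = PSL_2(11)$, whose element orders are $\{1, 2, 3, 5, 6, 11\}$. Any element of $G$ of a different order lies outside $[H]$ and must therefore lie in $[K]$.

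Since $(5, q_0+1) = 1$, the centre of $G$ is trivial and $G$ coincides with the simple group $U_5(2)$ in the notation of \cite{atlas}. Two useful elements forbidden to $[H]$ are: a Bertrand element of order $9$ as in Table~\ref{2} (with $n = 5$, $t = 3$, $q_0 = 2$); and a semisimple element of order $15$ obtained from the $(4,1)$-torus of $GU_5(2)$, restricted to $SU_5$. Both must lie in $[K]$, so $|K|$ is divisible by $\mathrm{lcm}(9, 15) = 45$ and $K$ contains representatives of the two corresponding conjugacy types.

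I would then enumerate the maximal subgroups of $G$ via the Aschbacher classification in \cite{kl}: the parabolics $P_1$ and $P_2$; the stabilisers of non-degenerate subspaces ($\sim GU_4(2)$ and $\sim GU_3(2) \times GU_2(2)$); the imprimitive $3^4{:}S_5$; the subfield $SO_5(2) \cong S_6$; and the extraspecial normaliser in class $\mathcal{C}_6$. All except the non-degenerate $1$-space stabiliser $K \cong GU_4(2)$ are eliminated by straightforward divisibility arguments (either $15\nmid |K|$, or the $3$-Sylow has exponent $3$ so that there is no element of order $9$). For the remaining case $K \cong GU_4(2)$, I would invoke the permutation-character criterion recalled at the end of Section~$2$ together with the ATLAS table for $U_5(2)$, and exhibit a conjugacy class of $G$ on which both $\chi_H$ and $\chi_K$ vanish. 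The main obstacle is precisely this last ATLAS-based verification: no uniform order-theoretic or geometric argument replaces the table lookup for $K = GU_4(2)$.
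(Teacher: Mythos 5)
Your opening move is identical to the paper's: Lemma~\ref{malleu} forces $H=PSL_2(11)$, whose element orders are $1,2,3,5,6,11$, so $K$ must absorb every class of any other order. From there you diverge in the choice of test elements, and two problems arise. First, your elimination dichotomy (``either $15\nmid|K|$, or the $3$-Sylow has exponent $3$'') does not dispose of the imprimitive subgroup $3^4{:}S_5$: its order $9720$ is divisible by $15$, and its Sylow $3$-subgroup $3^4{:}3$ has exponent $9$ (a $3$-cycle $\sigma$ and a vector $v$ with nonzero partial sum over the $\sigma$-orbit give $(v\sigma)^3\neq 1$). It is still eliminated, but only because it has no element of order $15$ — the centralizer of a $5$-element in $3^4{:}S_5$ is just $C_5$, since a $5$-cycle fixes no nonzero vector of the determinant-one submodule — which is a statement about element orders, not about $|K|$ or the Sylow exponent. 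You should also note that $P_2\cong 2^{4+4}{:}(3\times A_5)$ survives the order-$15$ test and is killed only by the absence of order-$9$ elements (its Sylow $3$-subgroup is elementary abelian of order $9$), so the two tests really are both needed.

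Second, and more seriously, the decisive case $K\cong GU_4(2)\cong 3\times U_4(2)$ is not actually handled: you defer it to an unexecuted ATLAS character-table lookup and even assert that no order-theoretic argument is available. That assertion is false, and the missing step is exactly where the paper starts: $G\cong U_5(2)$ has a (regular unipotent) class of elements of order $8$, while $U_4(2)$ has element orders $1,2,3,4,5,6,9,12$ only, so $3\times U_4(2)$ contains no element of order $8$, and neither does $L_2(11)$. Hence the order-$8$ class lies in neither $[H]$ nor $[K]$, and no character computation is needed. The paper's own proof uses the order-$8$ class together with the three order-$9$ classes: order $8$ rules out all candidates for $K$ except a parabolic containing a full Sylow $2$-subgroup, and the permutation character $\chi_{P_1}$ vanishes on the class $9C$, so $9C\notin[P_1]$. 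Your route is salvageable, but as written it stops short of a proof at the one subgroup that your chosen invariants (orders $9$ and $15$) cannot distinguish from $G$.
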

\begin{proof} We use \cite{atlas} for the description of the conjugacy
   classes of elements in $G$ and for the maximal subgroups in $G.$
   Let $\{H,K\}$ be a 2-covering of $G$ and observe that $G$ contains
one conjugacy class of elements of order 8 and three conjugacy class
of elements of order 9. By Lemma \ref{malleu}, we can assume
$H=PSL_2(11)$ and since this group does not contain elements of
order 8 or 9, $K$ must contain, up to conjugacy, all of them. But
since the character $\chi_{_{P_1}}$ vanishes on the class 9C, we
have $9C\notin[P_1].$ All the other maximal subgroups in $G$ do not
contain elements of order 8.
\end{proof}

From now on we will always assume that $q_0\neq 2$ when $n=5.$ \\
Let $t$ be a Bertrand prime for $n$ and consider a Bertrand element
$z\in SU_n(q)$ as in Table \ref{2}. Then $|z|=\frac{(q_0^t
+1)(q_0^{n-t} +(-1)^n)}{q_0+1}$ and the action on $V$ is of type
$t\oplus\frac{(n-t)}{2}\oplus \frac{(n-t)}{2}$ if $n$ is odd and of
type $t\oplus (n-t)$ if $n$ is even.

For the unitary groups we have either   $e \le d-3$,  or $(d,e)\in
\{(5,3),\ (7,5)\}.$ Hence Theorem \ref{main} applies and we search
$M$ among the groups in the Examples 2.1-2.9, for $n\geq 5.$
 Recall that $t\geq 5$ for $n\neq 5$ and note
that, for any $(q_0)_{2t}\in P_{2t}(q_0),$ we have $(q_0)_{2t}
\mid |z|.$ Moreover if $n$ is even, for any $(q_0)_{2(n-t)}\in
P_{2(n-t)}(q_0)$ we have $(q_0)_{2(n-t)} \mid |z|,$ while if $n$
is odd for any
$(q_0)_{(n-t)}\in P_{(n-t)}(q_0)$ we have $(q_0)_{(n-t)}\mid |z|.$\\
Note also that, due to the exceptions to Zsigmondy theorem, for
$n$ odd with $n-t=6,\ q_0=2,$ the set $P_{(n-t)}(q_0)$ is empty
and also when $n$ is even and $n-t=3,\ q_0=2,$ the set
$P_{2(n-t)}(q_0)$ is empty. We refer to these two situations
saying that $(n,t,q_0)$ belongs to the {\em the critical cases}.\\
\begin{lemma}\label{uni-z}
Let $G=SU_n(q)$ $n\ge 5$, $n \ne 6$. If $M$ is a maximal subgroup
of $G$ containing a Bertrand element, then one of the following
holds:
\begin{itemize}
\item [i)] $M= SU_t(q)\bot GU_{n-t}(q);$ \item [ii)] $n$ is odd
and $M =P_{(n-t)/2};$ \item[iii)] $(n,q_0) = (7,2)$, and $M =
GU_{n-1}(q)$.
\end{itemize}
\end{lemma}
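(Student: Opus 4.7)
The strategy parallels the proof of Lemma \ref{bertrand-sp}: apply Theorem \ref{main} to narrow the Aschbacher class of a maximal overgroup $M$ of the Bertrand element $z$, and then for each surviving class combine order arithmetic with the action type of $z$ on $V$ to identify the possibilities. Recall that $z$ has order $\frac{(q_0^t+1)(q_0^{n-t}+(-1)^n)}{q_0+1}$ and action of type $t\oplus\frac{n-t}{2}\oplus\frac{n-t}{2}$ when $n$ is odd and $t\oplus (n-t)$ when $n$ is even; by Remark \ref{primitivi}(i) one has $P_{2t}(q_0)\subseteq P_t(q_0^2)=P_t(q)$, and using Lemma \ref{aritme}(iv) one checks that every prime in $P_{2t}(q_0)$ divides $|z|$, so $z$ is a $ppd(n,q;t)$-element which is strong with respect to $P_{2t}(q_0)$. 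Outside the two Zsigmondy-critical triples, $|z|$ is also divisible by a prime in $P_{n-t}(q_0)$ (for $n$ odd) or in $P_{2(n-t)}(q_0)$ (for $n$ even).

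By Theorem \ref{main}(i) the classes $\mathcal{C}_4$ and $\mathcal{C}_7$ are excluded, and Remark \ref{no-c5} disposes of $\mathcal{C}_5$. The class $\mathcal{C}_6$ drops out because $(q_0)_{2t}\ge 2t+1>n$ exceeds every prime divisor of the order of a symplectic-type normaliser whose natural dimension is $n=r^m$. In $\mathcal{C}_2$, Lemma 4.1 of \cite{gpps} collapses the blocks to dimension $1$, after which the wreath product has order coprime to $(q_0)_{2t}$. In $\mathcal{C}_3$, the subgroup $SU_{n/k}(q^k).k$ with $k\mid n$ prime cannot absorb $(q_0)_{2t}$ by the same argument as in the symplectic case, since this would require $t\le ki\le n$ for some $i\le n/k$, contradicting $t>n/2$ for $k\ge 2$.

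For $\mathcal{C}_1$, the action type forces the non-degenerate decomposition $t\bot (n-t)$ and yields $M=SU_t(q)\bot GU_{n-t}(q)$, giving case (i). For $n$ odd, the two $(n-t)/2$-dimensional $\langle z\rangle$-summands admit a compatible totally isotropic pairing, so $z$ lies in the Levi $SU_t(q)\times GL_{(n-t)/2}(q)$ of the parabolic $P_{(n-t)/2}$, which yields case (ii). The exceptional case (iii) appears because for $(n,q_0)=(7,2)$ the eigenvalue pair $(\lambda,\lambda^{-q_0})$ governing the action of $z$ on the $2$-dimensional complement collapses: both coordinates become equal to a primitive cube root of unity in $\F_4^*$, so $z$ acts as a scalar on this $2$-space and in particular stabilises a non-degenerate $1$-space, whose stabiliser in $SU_7(4)$ is $GU_6(4)=GU_{n-1}(q)$. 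No such collapse occurs for $q_0>2$ or for larger $n$, since then $\lambda\ne\lambda^{-q_0}$.

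The main obstacle is the almost simple class $\mathcal{S}$. Here Example 2.6 a) of \cite{gpps} (together with Examples 2.5 and 2.6 b), c) in the small cases $(d,e)=(5,3),(7,5)$) gives $M\le Z(G)\times Sym(m)$ with $m\in\{n+1,n+2\}$ and $(q_0)_{2t}=2t+1$. The plan, imitating the symplectic proof, is to extract from Lemma \ref{aritme}(ii) and (iv)--(vi) a prime $r\mid (q_0^t+1)/(q_0+1)$ distinct from $(q_0)_{2t}$ and from $(q_0)_{n-t}$ (or $(q_0)_{2(n-t)}$), and derive the impossible inequality $m\ge r+(q_0)_{2t}+(q_0)_{n-t}>n+2$. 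The Zsigmondy-critical triples and a short list of small exceptional $(t,q_0)$ for which no such $r$ is produced must be resolved by direct inspection of element orders in $Sym(m)$; the delicate point is to carry out this bookkeeping so that exactly the arithmetic coincidence $(n,q_0)=(7,2)$ survives, delivering case (iii) and nothing else.
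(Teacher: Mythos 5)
Your overall framework (reduce to Aschbacher classes via Theorem \ref{main}, then use order arithmetic and the action type of $z$) is the paper's, but there are genuine gaps. The most serious is your treatment of case (iii). You end by saying the class-$\mathcal{S}$ bookkeeping in $Sym(m)$ must be carried out ``so that exactly the arithmetic coincidence $(n,q_0)=(7,2)$ survives, delivering case (iii)''. But $M=GU_{n-1}(q)$ is a $\mathcal{C}_1$ subgroup (the stabiliser of a non-degenerate $1$-space), not an almost simple group in $\mathcal{S}$, so no amount of $Sym(m)$ analysis can produce it. In the paper $\mathcal{S}$ is eliminated outright and cheaply: since $t$ is odd, $q_t\neq t+1$, and a check of Tables 2--8 of \cite{gpps} together with Theorem \ref{main}(ii),(iii) (note $e=t\le d-3$ for $n\ge 8$, and $(d,e)\in\{(5,3),(7,5)\}$ otherwise) rules out Examples 2.3--2.9 entirely; there is no residual case to inspect. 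Case (iii) instead falls out of the $\mathcal{C}_1$ analysis of $M\cong SU_m(q)\bot GU_{n-m}(q)$ with $m=1$, $n=7$: one must decide whether $GU_6(q)$ contains an element of order $|z|=(q_0^5+1)(q_0-1)$, and this happens only for $q_0=2$ (where $|z|=33$). Your eigenvalue-collapse heuristic points in that direction, but you never perform this order check for general $q_0$, and your final paragraph contradicts it.

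Two further gaps. First, for the parabolics you only verify that $z$ \emph{lies in} $P_{(n-t)/2}$ when $n$ is odd; the lemma requires proving that no other $P_m$ contains a Bertrand element (and that no parabolic at all does when $n$ is even). The paper's two-sided squeeze --- $(q_0)_{2t}$ dividing $|P_m|$ forces $m\le (n-t)/2$, while $(q_0)_{n-t}$ (for $n$ odd) or $(q_0)_{2(n-t)}$ (for $n$ even) forces $m\ge (n-t)/2$ or a contradiction, with the Zsigmondy-critical triples $(n-t,q_0)=(6,2)$ and $(3,2)$ handled by hand --- is entirely absent from your sketch, as is the analogous exclusion of $SU_m(q)\bot GU_{n-m}(q)$ for $m\neq n-t$ in the critical and small cases. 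Second, Remark \ref{no-c5} does not ``dispose of $\mathcal{C}_5$'': for $SU_n(q_0^2)$ that class also contains subgroups of type $SO_n^{\epsilon}(q_0)$ and $Sp_n(q_0)$, which are not of type $G_d(\tilde q)$ and which the paper excludes by the separate observation that $(q_0)_{2t}=q_t$ dividing $q_0\prod_{i\le n/2}(q^i-1)$ would force $t\le n/2$. (You are right, and the paper is implicitly careful about this too, that $z$ is only ``strong'' with respect to $P_{2t}(q_0)$, not all of $P_t(q)$, so even the genuine subfield case needs the adapted argument rather than a direct citation of the Remark.)
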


\begin{proof}
Let $M$ be a maximal subgroup of $SU_n(q)$ containing the Bertrand
element $z$. Then, by Remark \ref{primitivi}, $M\in
\bigcup_{i=1}^{7}\mathcal{C}_i \bigcup \mathcal{S}$ has the
$ppd(n,q;t)$-property, with $n \ge 8,$ $n/2 < t \le n-3$, or
$(n,t)\in \{(5,3),\ (7,5)\}.$ Thus Theorem \ref{main} applies and
we search $M$ among the groups in the Examples 2.1-2.9, for $n\geq
5.$ But obviously $q_t \ne t+1$ and $(t,n)=1.$ Moreover, looking
at the Tables 2-8 of \cite{gpps}, it is easily checked that $q_t
\neq 2 t+1$ since $t\leq n-2.$ These facts rules out the groups of
the examples 2.3, 2.4, 2.5, 2.6, 2.7, 2.8, 2.9, because they are
all given under at least one of the conditions: $t> n-3$ and
$n\neq 5,\ 7,$ or $t=n-3$ even, or $q_t = t+1,$ or $q_t=2t+1.$

Hence we reduce to $M\in \mathcal{C}_1$ or $M \in \mathcal{C}_5.$

Let $M\in \mathcal{C}_1.$ If $M \cong SU_m(q)
   \bot \ GU_{n-m}(q))$, with $m<n/2,$
the condition $z \in M$, implies that $(q_0)_{2t}$ divides the
order of a maximal torus either of $SU_m(q)$ or of $GU_{n-m}(q),$
hence $(q_0)_{2t}$ divides
$$\alpha=\prod_{i=1}^k(q_0^{s_i} -(-1)^{s_{i}}) \prod_{i=1}^l(q_0^{r_i}
-(-1)^{r_{i}}),$$ with
       $\sum_{i=1}^{k}s_{i}=m\ $ and $\ \sum_{i=1}^{l}r_{i}=n-m.$
    Since $m<n/2<t,$ we must have $n-m\geq t.$ For $n=5,\  7$ this
produces $m=1$ or $m=2.$ On the other hand $GU_4(q)$ does not
contain an element of order $(q_0^3+1)(q_0-1),$ if $q_0 \neq 2$
and $U(6,q)$  contains an element of order $(q_0^5+1)(q_0-1),$
only if $q_0 =2$. Then, with the exception of $SU_7(4),$ we get
 again
$m=2=n-t.$

For $n>7,$ we  suppose that $(n,t,q_0)$ do not belong to the
critical cases. Hence if
    $m<n-t,$ the condition
$(q_0)_{(n-t)}|\alpha$ for $n$ odd or $(q_0)_{2(n-t)}|\alpha$ for
$n$ even couldn't be fulfilled. Then we are left with  $m=n-t.$

We now suppose that  $(n,t,q_0)$ belongs to the critical cases.
Let first $n$ be odd, $n-t=6$, $q_0=2$. Then  $m \le 6$ and $|z|=
(2^t +1) \cdot 21$. But $7 \cdot 2_{2t}$ divide the order of an
element in $SU_m(4)\bot
GU_{n-m}(4)$ if and only if $m=6=n-t$.

If $n$ is even, $n-t=3$, $q_0=2$, then  $m \le 3$ and $|z|= (2^t
+1)\cdot 3$. Observe that if  $m=1,2$ there is no element of order
$|z|$ in $GU_{n-m}(q)\bot SU_m(q).$
Thus again $m=3=n-t.$

    If $$M=P_m \cong q_0^{n(2n-3m)}\ :\ \frac{1}{q_0+1}(GL_m(q)\times GU_{n-2m}(q)),$$
     we have that $$|M|_{q_0'}=
(q_0^2-1) \prod_{i=2}^m(q^{i} -1)\ \prod_{i=2}^{n-2m}( q_0^{i}
-(-1)^i)$$ and $m\leq n/2<t.$ By Lemma \ref{primitivi},
$(q_0)_{2t}$ is also a primitive prime divisor $q_t$ for $q^t-1$
and divides $|z|.$ Since $q_t$ does not divide
$\prod_{i=2}^m(q^{i} -1)$ we need $(q_0)_{2t}$ divides $(q_0^i-(-1)^i)$ for
some $i\leq n-2m$ which gives $m\leq (n-t)/2.$

Now suppose that $M$ is of type $P_m$ and  that $(n,t,q_0)$ do not
belong to the critical cases.
    Let $m\le (n-t)/2$ and $n$ even;
    then $(q_0)_{2t}\,(q_0)_{2(n-t)}$ divides $|z|$ and,
    there exists an element with order divisible by
$(q_0)_{2t}\,(q_0)_{2(n-t)}$ in $GL_m(q)\times SU_{n-2m}(q).$ The
possible order of such an element is a divisor of
$$\beta=\prod_{i=1}^k(q^{r_i} -1) \prod_{j=1}^l(q_0^{s_j}
-(-1)^{s_{j}})   ,$$  with
       $\sum_{i=1}^{k}r_{i}=m\ $ and $\ \sum_{j=1}^{l}s_{j}=n-2m\ (\geq
t).$

Since $m<t$ the prime $(q_0)_{2t}=q_t$  does not divide
$\prod_{i=1}^k(q^{r_i} -1)$ and we must have, say, $t\mid s_1$ and
$\ \sum_{j=2}^{l}s_{j}\leq n-2m-t<n-t.$ Then $(q_0)_{2(n-t)}$ does
not divide $\prod_{j=1}^l(q_0^{s_j} -(-1)^{s_{j}}) $ and we need
$(q_0)_{2(n-t)}=q_{n-t}$ divides $q^{r_i}-1,$ hence the
contradiction $n-t\leq r_i\leq m\leq (n-t)/2.$\\ Let $m \le
(n-t)/2$ and $n$ odd. If $n=5,\ 7$ we have finished. If $n\geq 9,$
then there exist primitive prime divisors $(q_0)_{2t},\
(q_0)_{(n-t)}$ and we have that $(q_0)_{2t}\,(q_0)_{(n-t)}$
divides $$\beta=\prod_{i=1}^k(q^{r_i} -1) \prod_{j=1}^l(q_0^{s_j}
-(-1)^{s_{j}}),$$ with
       $\sum_{i=1}^{k}r_{i}=m\ $ and $\ \sum_{j=1}^{l}s_{j}=n-2m\ (\geq
       t).$ Hence $t\mid s_1$ and $\
\sum_{j=2}^{l}s_{j}\leq n-2m-t<n-t,$ which implies that
$(q_0)_{(n-t)}=q_{(n-t)/2}$ divides $\prod_{i=1}^k(q^{r_i} -1)$
and then $(n-t)/2\leq m.$ Thus $m=(n-t)/2$ and $M=P_{(n-t)/2}.$

Now suppose that  $(n,t,q_0)$  belongs to the critical cases. If
$n$ is even, $n-t=3$ and $q_0=2,$ then  $m \le (n-t)/2 =3/2$ gives
the only case $m=1.$ On the other hand the action of type $t\oplus
3$ of $z$ on $V$ is not compatible with $z\in P_1.$

If  $n$ is odd, $n-t=6$, $q_0=2$. Then $m \le 3$ and $|z|= (2^t
+1)\cdot 21$. Then $7 \neq 2_{2t},$ since $2_{2t}\geq 2t+1\geq 11$
and $7\cdot 2_{2t}$  divide the order of an element  of $P_m\cong
q_0^{\frac{(n-m)(n+m-1)}{2}}\ :\ GL_m(q)\times SU_{n-2m}(q)$ if
and only if $m=3=(n-t)/2.$
    \\
Next let $M\in \mathcal{C}_5.$ If $M=N_G(GU_{n}(\tilde{q}^2))$
with $\tilde{q}^r=q_0$ and $r$ an odd prime, then arguing as in
Remark \ref{no-c5}, we conclude that $M$ is not of this type.

If $M$ is of type $SO_n^{\epsilon}(q_0)$, with $q_0$ odd, or of
type $Sp_{n}(q_0)$, with $n$ even, we have $$\pi(|M|)\subseteq
     \pi(q_0 \prod_{i=1}^{[n/2]}(q^{i} -1))$$ and
     $(q_0)_{2t}=q_t$ divides $|M|$  gives $t\leq i\leq n/2,$ a
     contradiction.
\end{proof}

\begin{prop}\label{unitari}
Let  $G= SU_{n}(q)$, $n\ge 5,$ with $n\neq 6.$ Then $G$ is not
2-coverable.
\end{prop}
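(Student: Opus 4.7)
The plan is to follow the blueprint of Proposition \ref{simplettici}. The small case $G = SU_5(4)$ is handled by Lemma \ref{$SU_5(4)$}. For $(n,q_0)\ne(5,2)$, I assume towards a contradiction that $\{H,K\}$ is a $2$-covering of $G$. The Singer cycle (for $n$ odd) or low-Singer cycle of maximal rank (for $n$ even) $s$ from Lemma \ref{malleu}, together with the Bertrand element $z$ from Lemma \ref{uni-z}, lie in $[H]\cup[K]$. A direct inspection shows that the lists in Lemma \ref{malleu} and Lemma \ref{uni-z} involve disjoint geometric types---the only shared type could be $GU_{n-1}(q)$, but Lemma \ref{malleu}(ii) requires $n$ even while Lemma \ref{uni-z}(iii) requires $n=7$---so up to renaming one may assume $H$ is a candidate from Lemma \ref{malleu} and $K$ a candidate from Lemma \ref{uni-z}.

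The core of the argument is to exhibit, for every admissible pair $(H,K)$, a witness $y\in G$ with $y\notin [H]\cup[K]$. For $n\ge 9$, I would take $y\in (SU_{n-2}(q)\bot GU_2(q))\cap SU_n(q)$ of order divisible by the primitive prime divisor $(q_0)_{2(n-2)}$, obtained by combining an element of $GU_{n-2}(q)$ of order $q_0^{n-2}+1$ (arising from the ``unitary Singer'' embedding $GU_1(q_0^{2(n-2)})\hookrightarrow GU_{n-2}(q)$) with a suitable element of $GU_2(q)$ correcting the determinant. The only non-trivial invariant subspaces of such $y$ are the non-degenerate $(n-2)$- and $2$-dimensional summands, which immediately rules out $y\in [GU_{n-1}(q)]$, $y\in [P_{(n-t)/2}]$, and $y\in [SU_t(q)\bot GU_{n-t}(q)]$ (since $n/2<t\le n-3$ forces $\{t,n-t\}\ne\{n-2,2\}$).

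What remains, and is the main technical obstacle, is excluding $y$ from the $\mathcal{C}_3$-candidate $H=SU_{n/k}(q^k).\tfrac{q_0^k+1}{q_0+1}.k$ when $n$ is odd. This reduces to the divisibility statement $(q_0)_{2(n-2)}\nmid|H|$, which follows because the order of $q_0$ modulo $(q_0)_{2(n-2)}$ is exactly $2(n-2)$: for this prime to divide a factor $q^{jk}\mp 1=q_0^{2jk}\mp 1$ appearing in $|H|$, one needs $(n-2)\mid jk\le n$, and combined with the constraints $k\mid n$ prime and $k$ odd this yields no admissible $k>1$. Finally, for the small cases $n\in\{5,7\}$ the action type $(n-2)\oplus 2$ clashes with $\{t,n-t\}$; here I would instead choose $y$ with action $(n-1)\oplus 1$ (for $n=5$) or $4\oplus 3$ (for $n=7$), whose order is again divisible by $(q_0)_{2(n-2)}$, and the analogous action-and-divisibility checks dispose of these exceptional configurations, including the extra possibility $K=GU_{n-1}(q)$ occurring in Lemma \ref{uni-z}(iii).
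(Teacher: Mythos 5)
Your overall architecture is the paper's: dispose of $SU_5(4)$ by Lemma \ref{$SU_5(4)$}, use the disjointness of the lists in Lemmas \ref{malleu} and \ref{uni-z} to fix which component contains $s$ and which contains $z$, and then kill each surviving pair $(H,K)$ with a third witness element. Your treatment of $n$ odd, $n\ge 9$ (the element of order $q_0^{n-2}+1$ and the primitive prime divisor $(q_0)_{2(n-2)}$) is exactly the paper's argument. But the witness you propose does not exist in the remaining cases, and this is a genuine gap, not a detail. For $n$ even, $n-2$ is even, and the embedding $GU_1(q_0^{2(n-2)})\hookrightarrow GU_{n-2}(q_0^2)$ you invoke requires odd degree (cf.\ Table 1: unitary Singer cycles exist only in odd dimension); the maximal tori of $GU_{n-2}(q_0^2)$ have orders $\prod_i\bigl(q_0^{a_i}-(-1)^{a_i}\bigr)$ with $\sum a_i=n-2$, so no semisimple element of order $q_0^{n-2}+1$ exists there. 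Worse, $(q_0)_{2(n-2)}$ divides a factor $q_0^{i}-(-1)^{i}$ of $|GU_n(q_0^2)|$ only when $i=n-2$ is odd, so for $n$ even this prime does not divide $|SU_n(q)|$ at all and no witness of the kind you describe can be found. The paper instead takes $y$ of order $(q_0^n-1)/(q_0+1)$ and works with $(q_0)_n$, falling back on $(q_0)_{n/2}$ when $n/2$ is odd (to dodge the Zsigmondy exception and the divisibility of $|GU_{n-1}(q)|$).

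The small cases are also broken as stated. For $n=5$ you want $(q_0)_{2(n-2)}=(q_0)_6$ to divide the order of an element of action type $4\oplus 1$; but $(q_0)_6$ divides $q_0^3+1$ and no $q_0^i\pm 1$ with $i<3$, so any semisimple element of order divisible by $(q_0)_6$ must have an irreducible constituent of dimension $3$ over $\f{}$, which a $4\oplus 1$ element does not have. The same objection applies to $n=7$: $(q_0)_{10}$ divides $q_0^5+1$, forcing a $5$-dimensional irreducible constituent, incompatible with type $4\oplus 3$. The paper's choices are an element of order $q_0^4-1$ (type $2\oplus 2\oplus 1$, prime $(q_0)_4$) for $n=5$ and one of order $(q_0^4-1)(q_0^3+1)/(q_0+1)$ (primes $(q_0)_4\cdot(q_0)_6$) for $n=7$, together with a separate ad hoc computation for $(n,q_0)=(7,2)$ where $P_6(2)=\varnothing$ and the extra candidate $K=GU_6(4)$ from Lemma \ref{uni-z}(iii) must be excluded by hand; your proposal waves at this subcase but the prime you rely on is unavailable there. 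To repair the proof you must replace the witness in the $n$ even and $n\in\{5,7\}$ cases by elements whose action types are actually compatible with the primitive prime divisors you want in their orders.
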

\begin{proof}
        Let $n
\ge 5,$ with $n\neq 6$ and assume, by contradiction, that $\{H,K
\}$ is a 2-covering of $G$. Let first $n$ be even. Then,
     by Lemma \ref{malleu},  $H \cong  GU_{n-1}(q).$ Moreover, by Lemma \ref{uni-z},
      we have $K=SU_t( q)
       \bot GU_{n-t}(q).$

We consider an element $y\in G$ of order $\frac{q_0^n-1}{q_0+1}$.
Then $(q_0)_n$ divides $|y|.$ If $n/2$ is even, then $(q_0)_{n}$
divides neither $|H|$ nor $|SU_t( q)
       \bot U_{n-t}(q)|.$
        If $n/2$ is odd, then there
exists a  primitive  prime divisor $(q_0)_{n/2}$  and
$(q_0)_{n/2}$ divides $|y|,$ while $(q_0)_{n/2}$ divides neither
$|H|$ nor $|SU_t( q)\bot U_{n-t}(q)|.$

We now suppose that $n$ is odd. Then  $H=
SU_{n/k}(q^k).\,\frac{q_0^k+1}{q_0+1}\,.k,$ with  $\ k|n$ a prime.
Let first $n \ge 9;$ then $K=SU_t( q) \bot U_{n-t}( q)$ or
$K=P_{(n-t)/2}.$ We consider a low-Singer cycle $y\in G$ of order
$q_0^{n-2}+1$. Then $ (q_0)_{2(n-2)}$ divides the order of $y$,
but it
divides neither $|H|$ nor $|K|$.\\
   If $n=5,$ by Lemma \ref{$SU_5(4)$}, we can assume $q_0\neq 2$ and
   pick  $y\in G$ with $|y|=q_0^4-1.$
   Assume $y\in H\cong\frac{q_0^5+1}{q_0+1}\,. \,5;$ then
    $(q_0)_4=5,$  which gives $\frac {q_0^4-1}{5}$ divides
$(\frac{q_0^5+1}{q_0+1},q_0^4-1)$ which in turn divides
  $ 5$ against $q_0\neq 2.$ It is also
    clear that $(q_0)_4$ does not divide $|P_1|,\
    |SU_3(q)\bot GU_2(q)|.$

   If $n=7$ and $q_0\neq 2,$ we
consider  $y\in G$ of order $\frac{q_0^4
    -1)(q_0^3+1)}{q_0+1}$. Then $ (q_0)_4 \cdot  (q_0)_6$
divides $|y|.$ Since $(q_0)_4\neq 7,$ it does not divide
$\frac{q_0^7+1}{q_0+1}\,. \,7;$ on the other hand $(q_0)_4 \cdot
(q_0)_6$ does not divide $|SU_5(q) \bot GU_2( q)|.$

Moreover $ P_1$  contains no element of order divisible by
$(q_0)_4 \cdot  (q_0)_6.$

If $n=7$, $q_0=2$, then $|y|=45$ does not divide the order of
$|H|=43\cdot 7.$ It is also clear that no element of order 45
belongs to $SU_5(4)\bot U_2(4)$ or to $ P_1\cong 2^{21}:C_3\times
SU_5(4)$ or to $GU_6(4)$.
\end{proof}

\section{Orthogonal groups in odd dimension}

Let $G=\Omega_{2n+1}(q)$ with $n\geq 5$ and consider the maximal
subgroups of $G$, containing a low-Singer cycle $x$ of maximal rank.
Then $x$ has order $(q^{n}+1)/2,$  action of type $2n \oplus 1$ and
the maximal groups containing $x$ are known. Recall that $q$ is
automatically odd.

\begin{lemma}\cite[Theorem 1.1]{msw}\label {msw-odd} Let
$G=\Omega_{2n+1}(q)$, $n \ge 5$  and  $x\in G$ be an element of
order $(q^{n}+1)/2.$  If $M$ is a maximal subgroup of $G$ containing
$x$ then, up to conjugacy, $M$ is isomorphic to
$\Omega_{2n}^-(q).2.$
\end{lemma}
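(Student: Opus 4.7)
The plan is to show that, up to conjugacy, the only maximal subgroup of $G=\Omega_{2n+1}(q)$ containing $x$ is $\Omega_{2n}^-(q).2$, namely the stabilizer of the orthogonal decomposition $V=T\perp L$ where $T$ is the $2n$-dimensional irreducible summand and $L$ the $1$-dimensional fixed space. The minus type of $T$ is forced by the fact that the restriction of $x$ to $T$ is a Singer cycle of the group acting on $T$, which has order $q^n+1$ only in the minus-type orthogonal group. So this stabilizer really contains $x$; all the content is uniqueness.

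My first step is to observe that $x$ is a \emph{strong} $ppd(2n+1,q;2n)$-element. Indeed, any $q_{2n}\in P_{2n}(q)$ (which exists because $n\ge 5$) divides $q^{2n}-1=(q^n-1)(q^n+1)$ but not $q^n-1$, and $q_{2n}\ge 2n+1$ is odd; hence $q_{2n}$ divides $(q^n+1)/2=|x|$. Setting $d=2n+1$, $e=2n$, we have $d/2<e\le d$, so Theorem \ref{main} governs the possibilities for a maximal subgroup $M$ containing $x$, and Remark \ref{no-c5} immediately excludes class $\mathcal{C}_5$. Theorem \ref{main}(i) excludes $\mathcal{C}_4$ and $\mathcal{C}_7$.

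Next I would eliminate the remaining geometric classes one at a time. For $\mathcal{C}_1$, the action type $2n\oplus 1$ forces any $x$-invariant non-degenerate subspace to be $T$, $L$, $0$ or $V$; and $x$ stabilizes no proper totally singular subspace because it acts irreducibly on $T$ and trivially on $L$. So $M$ is the stabilizer of $L$, yielding exactly $\Omega_{2n}^-(q).2$. For $\mathcal{C}_2$, the irreducibility of the $T$-component on $2n$ dimensions prevents $x$ from permuting a system of proper subspaces summing to $T$, and the fixed line $L$ cannot supply the remaining pieces. For $\mathcal{C}_3$, a field-extension subgroup $\Omega_{(2n+1)/k}(q^k).k$ with $k\mid 2n+1$ prime has order whose $q'$-part is a product of factors $q^{ki}\pm 1$ with $i\le(2n+1-k)/(2k)$; requiring $q_{2n}$ to divide one of these gives $2n\mid 2ki$ and hence $k\le 1$, a contradiction. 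Class $\mathcal{C}_6$ requires the dimension $2n+1$ to be an odd prime, with $M$ of very constrained order, which cannot accommodate a prime $q_{2n}\ge 2n+1$ in its order. Class $\mathcal{C}_8$ is empty in odd dimension.

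The main obstacle, and the place where genuine work is needed, is the class $\mathcal{S}$. By Theorem \ref{main} every almost simple $ppd(2n+1,q;2n)$-group appears in Examples 2.6--2.9 of \cite{gpps}. Since $e=d-1$ and $q_{2n}\ge 2n+1$, inspection of those Examples leaves only a handful of candidates in small dimensions (with $q_{2n}\in\{2n+1,\,4n+1\}$, etc.). For each I would compare the largest element order of $M$ with $(q^n+1)/2$, eliminating everything: either $(q^n+1)/2$ is too large to fit in $M$, or the defining characteristic and dimension constraints fail for $n\ge 5$. Once this case analysis is complete, no $\mathcal{S}$ possibility survives, and the lemma follows.
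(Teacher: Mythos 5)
The first thing to say is that the paper does not prove this lemma at all: it is quoted verbatim as Theorem~1.1 of \cite{msw}, so there is no ``paper's own proof'' to match. What you have written is an attempt to reprove that external result, and your overall strategy --- note that $x$ is a strong $ppd(2n+1,q;2n)$-element, invoke Theorem~\ref{main} and Remark~\ref{no-c5}, and then walk through the Aschbacher classes --- is indeed the same machinery the paper deploys for its analogous statements (compare the proof of Lemma~\ref{bertrand-odd}). Your $\mathcal{C}_1$ analysis is correct and complete: the only $\langle x\rangle$-invariant proper nonzero subspaces are $T$ and $L$, neither is totally singular, and the stabilizer of the nonsingular point $L$ with minus-type complement is $\Omega_{2n}^-(q).2$ by Proposition~4.1.6 of \cite{kl}. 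The $\mathcal{C}_3$ and $\mathcal{C}_5$ eliminations are also essentially sound.

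There are, however, two genuine gaps. First, your reason for excluding $\mathcal{C}_2$ --- that irreducibility of $x$ on $T$ ``prevents $x$ from permuting a system of proper subspaces summing to $T$'' --- is false as stated: an element inducing a single $2n$-cycle on a frame of $1$-spaces can perfectly well act irreducibly on their span (its characteristic polynomial there is $X^{2n}-c$). What actually kills this case is an order bound: by Example~2.3 of \cite{gpps} such an $M$ lies in $GL_1(q)\wr Sym(2n+1)$, where every element has order at most roughly $(q-1)\cdot\mathrm{lcm}$ of a partition of $2n+1$, which is far smaller than $(q^n+1)/2$ for $n\ge 5$. Second, and more seriously, the class $\mathcal{S}$ is exactly where the content of \cite[Theorem~1.1]{msw} lives, and you do not carry it out: ``for each I would compare the largest element order of $M$ with $(q^n+1)/2$'' is a plan, not a proof. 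With $e=d-1$ the Examples 2.6--2.9 of \cite{gpps} admit a nontrivial list of candidates (alternating groups on $d+1$ or $d+2$ points, several sporadic groups, $PSL_2(s)$ in various dimensions, cross-characteristic groups), each requiring its own arithmetic elimination in the style of the $\mathcal{S}$ paragraphs of Lemmas~\ref{bertrand-sp} and \ref{max-y-}; a similar (if quicker) explicit check is also owed for $\mathcal{C}_6$, since $e=d-1$ does not fall under Theorem~\ref{main}(ii). Until that case analysis is actually performed, the argument establishes only that the answer is $\Omega_{2n}^-(q).2$ \emph{or} one of finitely many unexamined almost simple possibilities.
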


\begin{lemma}\label{bertrand-odd}
Let $G=\Omega_{2n+1}(q)$, $n \ge 5$ and $M$ a maximal subgroup of
$G$ containing a Bertrand element. Then, up to conjugacy, one of the
following holds:
\begin{itemize}
     \item[i)] $M=\Omega_{2n+1}(q)\cap(O_{2t+1}(q)\, \bot\,
O^{-}_{2(n-t)}(q))$; \item[ii)]
$M=\Omega_{2n+1}(q)\cap(O_{2(n-t)+1}(q) \,\bot\, O^{-}_{2t}(q));$
\item[iii)] $M= \Omega_{2n+1}(q)\cap O_{2n}^+(q).$
\end{itemize}
\end{lemma}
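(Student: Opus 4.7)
The plan is to proceed by the same template as Lemma~\ref{bertrand-sp}: use the $ppd$-element machinery of \cite{gpps} to restrict the Aschbacher class of $M$, and then geometrically identify the surviving candidates. Since $q$ is odd (as $\Omega_{2n+1}(q)$ forces) and $2t,2(n-t)\geq 4$, no Zsigmondy exception intervenes, so $z$ is a strong $ppd(d,q;2t)$-element for $d=2n+1$, and both $q_{2t}\in P_{2t}(q)$ and $q_{2(n-t)}\in P_{2(n-t)}(q)$ divide $|z|$. A quick inspection of the Bertrand choice of $t$ yields $2t\leq d-4$, so Theorem~\ref{main} combined with Remark~\ref{no-c5} confines $M$ to $\mathcal{C}_1\cup\mathcal{C}_2\cup\mathcal{C}_3$ or to $\mathcal{S}$ as described in Example~2.6(a) of \cite{gpps}, recalling that the class $\mathcal{C}_8$ is vacuous for $\Omega_{2n+1}(q)$.

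For $\mathcal{C}_1$ the geometric input is that $z$ decomposes the natural module as $V=U\bot W\bot L$, where $\dim U=2t$, $\dim W=2(n-t)$, $\dim L=1$, the subspaces $U,W$ are anisotropic of minus type carrying Singer cycles of $O^-_{2t}(q)$ and $O^-_{2(n-t)}(q)$, and $z$ acts trivially on $L$. Every $z$-invariant subspace is therefore a direct sum of some of $U,W,L$; none of them is totally singular, so $z$ lies in no parabolic $P_m$. The non-degenerate subspace stabilizers correspond to stabilizing $L\bot W$, $L\bot U$ or $U\bot W$, yielding candidates (i), (ii) and (iii) respectively; in (iii) the $2n$-dimensional piece $U\bot W$ carries a plus-type form because the orthogonal sum of two minus-type spaces has plus type.

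The remaining classes are dispatched by short arithmetic arguments. For $\mathcal{C}_2$, Lemma~4.1 of \cite{gpps} forces the wreath-product block size to be $1$, incompatible with the $2t$-dimensional irreducible summand of $z$. For $\mathcal{C}_3$, $M\cong \Omega_{(2n+1)/r}(q^r).r$ with $r$ an odd prime dividing $2n+1$; combining the requirements $q_{2t}\mid |M|$ and $q_{2(n-t)}\mid |M|$ via Lemma~\ref{aritme} and Remark~\ref{primitivi} forces either $r=2n+1$ (so $M$ is trivial) or an arithmetic inequality contradicting $t>n/2$, in strict analogy with the symplectic case.

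The main technical obstacle is Example~2.6(a), where $M\leq \mathrm{Sym}(m)\times Z(G)$ with $m\in\{2n+2,2n+3\}$ and $q_{2t}=2t+1$. I plan to argue exactly as in Lemma~\ref{bertrand-sp}: using $q_{2(n-t)}\geq 2(n-t)+1$ together with the fact that $(q^t+1)/(q+1)$ is odd and exceeds $q_{2t}$ (by Lemma~\ref{aritme}), one extracts an extra odd prime $r\neq q_{2t},q_{2(n-t)}$; its presence in the cycle type of an element of $\mathrm{Sym}(m)$ of order $|z|$ forces $m\geq r+q_{2t}+q_{2(n-t)}\geq 2n+5$, contradicting $m\leq 2n+3$. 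A handful of small residual parameters (small $n$, or $t$ small) are then finished off by direct comparison of $|z|$ with the available element orders in the relevant $\mathrm{Sym}(m)$, which amount to routine arithmetic verifications that a complete write-up will dispose of.
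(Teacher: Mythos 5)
Your overall architecture coincides with the paper's, and your $\mathcal{C}_1$ treatment is actually cleaner than the paper's (which argues arithmetically via Propositions 4.1.6 and 4.1.20 of \cite{kl}): since $U$, $W$, $L$ are pairwise non-isomorphic irreducible $\langle z\rangle$-modules, every invariant subspace is a partial sum, none totally singular, and the three non-degenerate stabilizers come out as you say. The genuine gap is in $\mathcal{C}_3$. There $M$ is of type $O_{(2n+1)/b}(q^b).b$ with $b>1$ dividing $(2n+1,2t)$; since $2n+1$ is odd and $t$ is prime with $t>n/2$, this forces $b=t$ and $2n+1=3t$, so $M=\Omega_3(q^t).t$ of order $\tfrac{1}{2}q^{2t}(q^{2t}-1)\,t$. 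This case is \emph{not} killed by order divisibility: $q_{2t}$ divides $q^{2t}-1$, and the requirement $q_{2(n-t)}\mid |M|$ can be met by $q_{2(n-t)}=t$, which is arithmetically possible because the bound $q_{2(n-t)}\ge 2(n-t)+1=2n-2t+1$ equals $t$ exactly when $2n+1=3t$. A concrete instance is $G=\Omega_{21}(3)$ with $t=7$ and $M=\Omega_3(3^7).7$: here $q_6=7=t$ and $|z|=4\cdot 547\cdot 7$ divides $|M|$. So neither of your two alternatives (``$r=2n+1$'' or ``an arithmetic inequality contradicting $t>n/2$'') occurs. The paper disposes of this case by a structural argument your proposal does not contain: since $t\nmid|\Omega_3(q^t)|$, any element of order $t=q_{2(n-t)}$ in $M$ lies in a field-automorphism coset, and $q_{2t}$ does not divide $|C_{\Omega_3(q^t)}(\alpha)|=|\Omega_3(q)|$, so $M$ has no element of order $q_{2t}\,q_{2(n-t)}$, let alone of order $|z|$.

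Two smaller imprecisions. In $\mathcal{C}_2$, block size $1$ is not by itself ``incompatible with the $2t$-dimensional irreducible summand'': $O_1(q)\wr Sym(2n+1)$ is a legitimate subgroup of $\Omega_{2n+1}(q)$, and a monomial element can act irreducibly on a summand while permuting the lines. You still need the order argument the paper uses, namely that an element of $GL_1(q)\wr Sym(2n+1)$ of order divisible by $q_{2t}\,q_{2(n-t)}$ forces $2n+1\ge q_{2t}+q_{2(n-t)}\ge 2n+2$. In Example 2.6(a), the odd prime $r$ with $r\,q_{2t}\mid (q^t+1)/(q+1)$ need not differ from $q_{2t}$; the subcase $r=q_{2t}$ must be treated separately (as the paper does in the symplectic case, via an element of order $q_{2(n-t)}\,q_{2t}^2$ in $Sym(m)$). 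These two are repairable along the paper's lines, but the $\mathcal{C}_3$ case requires a genuinely missing idea.
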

\begin{proof} Let $G=\Omega_{2n+1}(q)$, $n \ge 5$ and $M$ a maximal subgroup of
$G$ containing a Bertrand element $z.$ Observe that, since $q$ is
odd and $2n-2t \ge 4$, then there exist primitive prime divisors
$q_{2t}$ and $q_{2(n-t)}$ and $q_{2t}\cdot q_{2(n-t)}$ divides $|z|$
for any $q_{2t}\in P_{2t}(q)$ and for any $q_{2(n-t)}\in
P_{2(n-t)}(q).$ In particular $z$ is a strong $ppd(d,q;e)$-element,
where $d=2n+1$ and $e=2t.$ By Theorem \ref{main}, Remark \ref{no-c5}
and by Table 3.5.D in \cite{kl}, $M$ belongs to one of the classes
$\mathcal C_i$, $i=1,2,3$ or to $\mathcal{S}$ and is described in
Example 2.6 a) of \cite{gpps}.

\vskip 3pt

\noindent $\mathcal{C}_1$. \ \ \ Suppose that $M$ is of type $P_m$
and that $q_{2t} \cdot q_{2(n-t)}$ divides the order of an element
in $M.$ Then, by Proposition 4.1.20 of \cite{kl}, we have $m \ge 2n
-2t$ and $n-m \ge t$ which gives $n \le t$ against the
definition of $t.$ \\
Suppose now that $M=\Omega_{2n+1}(q)\cap (O_{2k+1}(q) \bot
O^{\epsilon}_{2(n-k)}( q))$  and  refer to Proposition 4.1.6 in
\cite{kl} for its structure. If $k=0$ we must choose $\epsilon=+1$.
If $1 \le k \le n-1,$ then we must have either $2(n-k) \ge 2t$ and
$n-t \le k$, which gives $k=n-t$ or $t \le k$ and $n-t\le n-k$, that
is $k=t$. On the other hand it is clear that both these choices
works if and only if we select the minus sign.

\vskip 4pt

\noindent $\mathcal{C}_2$.\ \ \ These are the groups in the Example
2.3 of \cite{gpps}. Thus we have $q_{2t}=2t +1 $ and $M \le
GL_1(q)\, \wr\, Sym(2n+1)$. To guarantee that $q_{2t}\cdot
q_{2n-2t}$ divides the order of an element in $Sym(2n+1)$, we need
$2n+1 \ge q_{2t} + q_{2n-2t} \ge 2t +1 +2n -2t +1=2n +2$, a
contradiction.
\vskip 4pt

\noindent $\mathcal{C}_3$. \ \ \ These groups are described in
Example 2.4.  Since $d\neq e+1$ we consider only the case b) of
Example 2.4. Let $b>1$ a divisor of  $(2n+1, 2t).$ Then $n\neq 6$
and $b=t=(2n+1)/3.$ Thus, by Proposition 4.3.17 in \cite{kl},
$M=\Omega_3( q^t).\,t$ has order $\frac{q^{2t}(q^{2t}-1)\,t}{2}$ and
the condition $q_{2(n-t)}$
 divides $|M|$ implies that $q_{2(n-t)}=t.$ Thus $t$ does not divide $|\Omega_3( q^t)|$
  and the only elements of order $t$ in $M$ are the
 field automorphisms $\alpha.$
However, $q_{2t} \not | \ |C_{\Omega_3( q^t)}(\alpha)| = |\Omega_3(
q)|$ and we get no examples in this class.


\vskip 4pt \noindent $\mathcal{S}$. \ \ \ The maximal subgroups $M$
in Example 2.6 a) satisfy  $ M \le Sym(m) ,$  with $m= 2n +2$, if
$p$ does not divide $m$, or $m=2n+3$ if $p$ divides $m$. Moreover
$q_{2t}= 2t +1$, and $ q_{2t} \le m$. Let first $n \ge 7.$ Then $t
\ge 5$ and $(q^t +1)/(q+1) > 2t +1=q_e.$ Then arguing as in the
symplectic case in Lemma \ref{bertrand-sp}, we get an odd prime
$r\neq q_{2(n-t)}$ such that  $r \cdot q_e \cdot q_{2n -2t}$ divides
$|z|,$ which require the impossible relation $m>2n+3.$\\ If $n=5$,
then $q_6=7$ and $m=12$ or $13$ and again we can use the same
argument as in the symplectic case. If $n=6$ no case arises since
$2t+1=9$ is not a prime.
\end{proof}

\begin{prop}\label{ortogonali dispari}
Let  $G=\Omega_{2n+1}(q)$, $n\ge 5$. Then $G$ is not 2-coverable.
\end{prop}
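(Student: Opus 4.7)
The plan is to assume, for a contradiction, that $\{H,K\}$ is a 2-covering of $G=\Omega_{2n+1}(q)$ with maximal components, and first narrow down $H$ and $K$ using the preceding two lemmas. Since the low-Singer cycle $x$ of order $(q^n+1)/2$ lies in $[H]\cup[K]$, we may, after relabelling, assume $x\in H$, so that Lemma \ref{msw-odd} forces $H$ to be a conjugate of $\Omega_{2n}^-(q).2$. The Bertrand element $z$ must also lie in $[H]\cup[K]$, and because Lemma \ref{bertrand-odd} lists exactly three types of maximal overgroups of $z$, none of which is $\Omega_{2n}^-(q).2$, the component $K$ must be conjugate to one of the three subgroups
$$K_1=\Omega_{2n+1}(q)\cap(O_{2t+1}(q)\bot O^-_{2(n-t)}(q)),\ K_2=\Omega_{2n+1}(q)\cap(O_{2(n-t)+1}(q)\bot O^-_{2t}(q)),\ K_3=\Omega_{2n+1}(q)\cap O^+_{2n}(q).$$

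The contradiction will come from a single element $u\in G$ lying outside $[H]\cup[K]$ for each of the three admissible choices of $K$. Semisimple candidates are insufficient: for a semisimple $y\in G$ the eigenspace $V(+1):=\ker(y-I)$ has odd positive dimension (since $\dim V(-1)$ must be even for $\det y=1$), and the restriction of the form to $V(+1)$ is non-degenerate, so $V(+1)$ contains a non-isotropic vector whose orthogonal complement is a non-degenerate $2n$-dimensional subspace; this forces $y\in[H]\cup[K_3]$, regardless of $y$. I will therefore use the regular unipotent element $u\in G$, whose action on the natural module $V$ consists of a single Jordan block of size $2n+1$.

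In a Jordan basis $v_1,\dots,v_{2n+1}$ with $uv_i=v_i+v_{i-1}$, the $u$-invariant symmetric bilinear form satisfies $(v_i,v_j)=0$ unless $i+j=2n+2$, so the only proper non-zero $u$-invariant subspaces are $V_k=\ker((u-I)^k)=\langle v_1,\dots,v_k\rangle$, and these satisfy $V_k^{\bot}=V_{2n+1-k}$; hence every such $V_k$ is degenerate. Consequently $u$ stabilises no proper non-degenerate subspace of $V$. On the other hand each of $H,K_1,K_2,K_3$ is by definition the full $G$-stabiliser of some orthogonal decomposition $V=W_1\bot W_2$ into two non-degenerate summands of positive dimension, so no $G$-conjugate of any of these subgroups contains $u$; this contradicts the 2-covering hypothesis and proves the proposition. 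The main obstacle is that no order-divisibility test can rule out the pair $(H,K_3)$, because every semisimple element is automatically covered by $[H]\cup[K_3]$, so one is forced to introduce a unipotent element to access the geometric failure on the $K_3$ side.
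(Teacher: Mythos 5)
Your proof is correct and follows the paper's reduction exactly: both arguments use Lemma \ref{msw-odd} to force $H$ to be a conjugate of $\Omega_{2n}^-(q).2$ and Lemma \ref{bertrand-odd} to restrict $K$ to the three subgroups $K_1,K_2,K_3$. Where you diverge is the endgame. The paper splits into cases: for $K\in\{K_1,K_2\}$ it exhibits a semisimple element of order $q^n-1$ lying in no conjugate of $H$ or $K$, and only for $K=K_3$ does it invoke the regular unipotent element. You dispose of all three cases at once with the regular unipotent, via the clean geometric observation that every candidate component is contained in the stabiliser of a proper non-degenerate subspace, while a single Jordan block of size $2n+1$ admits only the chain of invariant subspaces $V_k$ with $V_k^{\bot}=V_{2n+1-k}$, all of which are degenerate. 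This is a genuine simplification: one element, one conceptual reason, no order computations, and it sidesteps the (slightly delicate) verification that $q^n-1$ is not an element order in $\Omega_{2n}^-(q).2$ or in $K_1,K_2$. Your side remark that semisimple elements cannot separate the pair $(H,K_3)$ --- because the $+1$-eigenspace of any semisimple element of $G$ has odd positive dimension and is non-degenerate --- is also correct and explains why the paper, too, is forced to a unipotent element in that case. One small imprecision: a Jordan basis need not make the Gram matrix exactly anti-diagonal, only anti-triangular, i.e. $(v_i,v_j)=0$ whenever $i+j\le 2n+1$; but that weaker statement already yields $V_k^{\bot}=V_{2n+1-k}$ by a dimension count, so nothing is lost.
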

\begin{proof}
Let $n \ge 5$ and assume, by contradiction, that  $\{H,K \}$ is a
2-covering of $G=\Omega_{2n+1}(q)$ with maximal components. Then, by
Lemma \ref{msw-odd} ,
 $H=(\Omega_{2n}^-(q)).2$ and $K$ is given in Lemma
 \ref{bertrand-odd}. If $$K\in \{\Omega_{2n+1}(q)\cap(O_{2t+1}(q)\, \bot\,
O^{-}_{2(n-t)}(q)), \Omega_{2n+1}(q)\cap(O_{2(n-t)+1}(q) \,\bot\,
O^{-}_{2t}(q))\}$$ we consider an element $y\in G$ of order $q^n
-1:$ then $y$ is not contained neither in $H$ nor in $K.$ If
$K=\Omega_{2n+1}(q)\cap O_{2n}^+(q),$ we observe that neither $H$
nor $K$ contains regular unipotent elements.

\end{proof}

\section{Orthogonal groups with Witt defect $0$}

Let  $G=\Omega^{+}_{2n}(q),$ with $n\geq 5.$ First of all, to
control the action of some crucial elements in $G$  we need the
following Lemma.
\begin{lemma}\label{action}The generator of $\Omega_2^-(q)\cong C_{\frac{q+1}{(2,q-1)}}$
operates irreducibly on the natural module $V$ if and only if $q\neq
3.$
\end{lemma}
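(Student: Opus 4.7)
The plan is straightforward since the group in question is very small. I would begin by realizing the natural module $V$ as $\F_{q^2}$ viewed as a $2$-dimensional $\F_q$-space, endowed with the anisotropic quadratic form $Q(v)=N_{\F_{q^2}/\F_q}(v)$; this is the standard model for the minus-type orthogonal plane over $\F_q$. Under this identification $SO_2^-(q)$ acts on $V$ by multiplication by the subgroup of norm-$1$ elements of $\F_{q^2}^*$, a cyclic group of order $q+1$. Since $\Omega_2^-(q)$ is the kernel of the spinor norm, of index $(2,q-1)$ in $SO_2^-(q)$, a generator $g$ corresponds to multiplication by some $\alpha\in\F_{q^2}^*$ of order $(q+1)/(2,q-1)$.

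Any proper nonzero $\F_q$-subspace $U\leq V$ must then be $1$-dimensional, hence of the form $U=\F_q v$ for some $v\neq 0$, and the condition $gU=U$ translates to $\alpha v\in\F_q v$, i.e.\ $\alpha\in\F_q$. Consequently $g$ fails to be irreducible exactly when $\alpha\in\F_q^*$, equivalently when the order $(q+1)/(2,q-1)$ of $\alpha$ divides $q-1$.

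The final step is a short arithmetic check. For $q$ even the order equals $q+1>q-1$, so $g$ is always irreducible. For $q$ odd the divisibility requirement reads $(q+1)/2\mid q-1$, i.e.\ $(q+1)\mid 2(q-1)=2(q+1)-4$, forcing $(q+1)\mid 4$; among odd prime powers this only allows $q=3$. In that case $|\Omega_2^-(3)|=2$ and the non-trivial involution, acting in odd characteristic, is diagonalisable with eigenvalues $\pm 1$, hence reducible. This completes the equivalence. No serious obstacle arises; the only bookkeeping to be careful about is the factor $(2,q-1)$ that distinguishes $\Omega_2^-(q)$ from $SO_2^-(q)$, and the two trivial divisibility checks that separate the even and odd characteristic cases.
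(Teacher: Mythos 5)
Your proof is correct and follows essentially the same route as the paper: both realize the generator as multiplication by an element of $\F_{q^2}^*$ of order $(q+1)/(2,q-1)$ and reduce irreducibility to the divisibility condition $q+1\mid 2(q-1)$, the only difference being that you argue directly with $1$-dimensional invariant subspaces where the paper cites a minimal-polynomial lemma from \cite{bl}. One tiny inaccuracy: for $q=3$ the generator is the scalar $-I$ (both eigenvalues $-1$, not $\pm 1$), but this does not affect the conclusion that it acts reducibly.
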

\begin{proof}
 Let $\Omega_2^-(q)=<x>.$ If $q$ is even, then $x$ is the Singer
cycle of $SL_2(q)$ and operates irreducibly. If $q$ is odd, we
observe that $x=\pi_{u^{2(q-1)}},$ where $<u>=\f{2}^*$ and,  by
Lemma 2.4 in \cite{bl}, the minimal polynomial $m(x)$  is
irreducible of degree $r\mid 2,$ minimal with respect to
$\frac{q^2-1}{q^r-1}\mid 2(q-1).$ If $r=1,$ we obtain $q+1\mid
2(q-1),$  which implies that 2 is the only prime divisor of $q+1,$
that is $q=2^i-1$ and hence $q=3.$ In this case the action on $V$
decomposes it into two submodules of dimension 1. If $r=2$ clearly
the action of $x$ is irreducible.
\end{proof}
Now observe that, from the embedding in $G=\Omega^{+}_{2n}(q)$ of
$$\Omega^-_{2m}(q)\bot \Omega^-_{2(n-m)}(q),$$
with $1\leq m< n/2\,$  we derive an element $\xi\in G$ of order
$$\,\frac{(q^{m}+1)(q^{n-m}+1)}{(q^{m}+1,q^{n-m}+1)(2,q-1)}.$$ If $(m,q)\neq (1,3)$ the action
of $\xi$ is of type $\,2m\oplus \,2(n-m)$ and otherwise, by Lemma
\ref{action},it is of type $1\oplus 1 \oplus 2(n-1).$

\begin{lemma}\cite[Theorem 1.1]{msw}\label {msw+}
Let $G=\Omega^+_{2n}(q)$, $n \ge 5.$ Let  $x\in G$  of order
$\,\frac{(q^{n-1}+1)(q+1)}{(q^{n-1}+1,q+1)(2,q-1)}\,$ and action of
type $2(n-1)\,\oplus\, 2$ if $q\neq 3$ and action of type
$2(n-1)\,\oplus\, 1\,
\oplus 1$ if $q=3.$\\
 If
$M$ is a maximal subgroup of $G$ containing $x$ then, up to
conjugacy, one of the following holds:
\begin{itemize}
\item[i)] $M=\Omega^+_{2n}(q) \cap(O^-_2( q)\bot\, O^-_{2(n-1)}(q));$
\item[ii)] $q=3$ and $M= \Omega_{2n-1}(3).2;$
  \item[iii)]
$n$ is even  and $M=\Omega^+_{2n}(q) \cap (GU_n(q^2).2); $
\item[iv)] $nq$ is odd and $M=\Omega_{n}(q^2).2;$

\end{itemize}
\end{lemma}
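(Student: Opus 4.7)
The plan is to verify that $x$ is a strong $ppd(2n,q;2(n-1))$-element, then apply Theorem \ref{main} and Remark \ref{no-c5} to narrow down $M$ to a short list of Aschbacher classes, and finally examine each class by hand. Using Lemma \ref{aritme} parts iv)--vi) to bound the denominator $(q^{n-1}+1,q+1)(2,q-1)$, one sees that for every $q_{2(n-1)}\in P_{2(n-1)}(q)$ one has $q_{2(n-1)}\mid |x|$, and moreover $|x|$ retains a prime divisor coming from $q+1$. Since the corresponding primitive prime $e=2(n-1)$ satisfies $d/2<e=d-2$, Theorem \ref{main} gives $M\notin\mathcal{C}_4\cup\mathcal{C}_7$, while the strong property and Remark \ref{no-c5} rule out $\mathcal{C}_5$. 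Parts ii)--iii) of Theorem \ref{main} leave the $\mathcal{S}$-examples 2.1--2.4 and 2.6a) in play.

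For $\mathcal{C}_1$ I would argue from the action type of $x$. Parabolic subgroups $P_m$ are excluded (when $q\neq 3$) because the decomposition $2(n-1)\oplus 2$ with both summands of minus type leaves no nonzero totally singular $\langle x\rangle$-invariant subspace. For non-degenerate stabilizers $\Omega^+_{2n}(q)\cap(O^{\epsilon_1}_{2m}(q)\bot O^{\epsilon_2}_{2(n-m)}(q))$ the only compatible choice forces $m=1$ and both signs to be $-$, yielding case i). The exceptional behaviour at $q=3$ comes from Lemma \ref{action}: the $2$-dimensional minus-type summand splits as $1\oplus 1$, so $x$ fixes a non-singular $1$-space, whose stabilizer is $\Omega_{2n-1}(3).2$, giving case ii).

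For $\mathcal{C}_3$ the extension-field subgroups $\Omega^+_{2n}(q)\cap GU_n(q^2).2$ and $\Omega_n(q^2).2$ exist, respectively, only under the parity hypothesis $n$ even (case iii) and $nq$ odd (case iv), and in both cases a maximal torus of the prescribed shape accommodates an element with exactly the order and action of $x$. Other subfield subgroups of type $\Omega^\pm_{2n/r}(q^r)$ are ruled out because $q_{2(n-1)}$ cannot divide the order of any maximal torus of such a group (no exponent $\leq 2n/r$ is divisible by $n-1$). Class $\mathcal{C}_2$ is incompatible with the irreducible action of $x$ on the $2(n-1)$-dimensional summand, and $\mathcal{C}_6$, $\mathcal{C}_8$ are eliminated by routine divisibility and dimension checks analogous to those in Lemma \ref{bertrand-sp}.

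The remaining obstacle is the almost-simple class $\mathcal{S}$, where one exploits Example 2.6a) of \cite{gpps}. The bound $q_{2(n-1)}\leq 2n+2$ combined with the extra prime divisor of $|x|$ coming from $(q^t+1)/(q+1)$-type factors (via Lemma \ref{aritme}) produces a three-prime cyclic-subgroup requirement in $\mathrm{Sym}(m)$ with $m\leq 2n+3$, which fails by an arithmetic count mirroring the symplectic analysis in Lemma \ref{bertrand-sp}. The most delicate part of the whole argument is the simultaneous handling of $\mathcal{C}_3$ in cases iii)--iv), where one must match up the parity conditions on $n$ and $q$ with the existence of a torus of order $(q^{n-1}+1)(q+1)/\!\gcd$, and the small-$n$ residue cases inside $\mathcal{S}$.
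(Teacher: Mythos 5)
The paper does not prove this lemma at all: it is quoted directly from \cite[Theorem 1.1]{msw}, and the sentence following the statement explains that the overgroups of $x$ are read off as a sublist of the overgroups of the low-Singer cycle $\tilde S$ of order $(q^{n-1}+1)/(2,q-1)$ classified there. Your from-scratch argument via the $ppd$-machinery is therefore a genuinely different route, and in principle a legitimate one -- it is exactly the strategy the paper uses for the companion Lemmas \ref{bertrand-sp}, \ref{max-y} and \ref{max-y-}.

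However, as written it has a concrete gap. Here $d=2n$ and $e=2(n-1)=d-2$, so parts ii) and iii) of Theorem \ref{main} simply do not apply: they require $e\le d-4$, respectively $e=d-3$. You are therefore not entitled to conclude that only Examples 2.1--2.4 and 2.6~a) of \cite{gpps} survive. The Examples 2.5 (class $\mathcal{C}_6$), 2.6~b), 2.6~c), 2.7 (sporadic), 2.8 and 2.9 (cross-characteristic Lie type) all remain in play and each needs an explicit elimination; compare the proof of Lemma \ref{max-y-}, where the element also has $e=d-2$ and the paper must work through $2.M_{12}$, $J_1$, $2.Co_1$, $PSp_{2a}(3)$ and $PSL_2(s)$ one by one via centralizer-order estimates. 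Your closing paragraph only treats Example 2.6~a), so the $\mathcal{S}$- and $\mathcal{C}_6$-analysis is missing. Two smaller points: Remark \ref{no-c5} only excludes $\mathcal{C}_5$-subgroups of the \emph{same} form type $G_d(\tilde q)$, so the subgroup of type $O^-_{2n}(q_0)$ with $q=q_0^2$ needs the separate divisibility check given in Lemma \ref{max-y}; and the ``extra prime divisor coming from $q+1$'' that you invoke does not exist when $n$ is even, since then $q+1$ divides $q^{n-1}+1$ and $|x|=(q^{n-1}+1)/(2,q-1)$, so any counting argument in $\mathrm{Sym}(m)$ relying on a third prime must be restructured in that case.
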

We emphasize that the maximal subgroups of $G$, containing $x$ are
obtained as a sublist of those containing a low-Singer
 cycle $\tilde{S}$ of order $\,\frac{(q^{n-1}+1)}{(2,q-1)}\,$
in \cite[Theorem 1.1]{msw}.

We now look for the  second component of a 2-covering.

\begin{lemma}\label{max-y}
Let $G=\Omega^{+}_{2n}(q),$ $n \ge 5$ and $y\in G$ of order
$\frac{(q^{n-2} +1)(q^2+1)}{( q^{n-2}+1,\,q^2 +1)(2,q-1)}$
 and action of type $2(n-2)\oplus
4.$ If $M$ is a maximal subgroup of $G$ containing $y$ then, up to
conjugacy, one of the following holds:
\begin{itemize}
\item[i)] $M=\Omega^+_{2n}(q) \cap(O^{-}_4(q) \bot\, O^{-}_{2(n -2)} (q))$;
\item[ii)] $n$ is even and $M=\Omega^+_n( q^2).[4].$
\end{itemize}
\end{lemma}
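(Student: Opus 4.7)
The plan is to mimic the strategy of Lemma \ref{bertrand-odd}. First I would verify that $y$ is a strong $ppd(2n,q;e)$-element with $e=2(n-2)$: every $q_{2(n-2)}\in P_{2(n-2)}(q)$ divides $q^{n-2}+1$ and, by Remark \ref{primitivi} ii), lies neither in $P_2(q)$ nor in $P_4(q)$ (since $2(n-2)\geq 6$ for $n\geq 5$), hence is coprime to $q^2+1$ and to the gcd in the denominator of $|y|$. Zsigmondy's theorem supplies such a prime except when $(q,2(n-2))=(2,6)$, i.e.\ $(n,q)=(5,2)$, which I would treat by direct inspection of the maximal subgroups of $\Omega^+_{10}(2)$. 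In the generic case, since $e=d-4$ with $d=2n$, Theorem \ref{main} (i)-(ii) excludes $\mathcal{C}_4\cup\mathcal{C}_7$ together with Examples 2.5, 2.6 b), 2.6 c), 2.7, 2.8, 2.9 of \cite{gpps}, while Remark \ref{no-c5} excludes $\mathcal{C}_5$. Thus $M$ lies in $\mathcal{C}_1$, $\mathcal{C}_2$, $\mathcal{C}_3$, or in $\mathcal{S}$ as described in Example 2.6 a).

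For $\mathcal{C}_1$, I would exploit the Levi structure $GL_m(q)\times \Omega^+_{2n-2m}(q)$ of a parabolic $P_m$ (Proposition 4.1.20 of \cite{kl}): the simultaneous appearance of $q_{2(n-2)}$ and $q_4$ in the order of an element of $M$ forces incompatible inequalities on $m$, ruling parabolics out. For a non-degenerate stabilizer $\Omega^+_{2n}(q)\cap(O^\epsilon_{2k}(q)\bot O^{\epsilon'}_{2(n-k)}(q))$, the action type $2(n-2)\oplus 4$ of $y$ forces $\{2k,2n-2k\}=\{4,2(n-2)\}$, and irreducibility of $y$ on each summand (an irreducible element on an even-dimensional orthogonal space preserves only the minus form) forces $\epsilon=\epsilon'=-$; the product of signs is $+$, consistent with being inside $\Omega^+_{2n}(q)$, yielding case i).

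For $\mathcal{C}_2$, the wreath product $O^\epsilon_m(q)\wr Sym(2n/m)$ falls under Example 2.3 of \cite{gpps}, forcing $m=1$ and then a contradiction since $q_{2(n-2)}+q_4\geq (2n-3)+5>2n$ for $n\geq 5$; the remaining $\mathcal{C}_2$ subgroup $GL_n(q).2$ of $\Omega^+_{2n}(q)$ cannot contain elements of order divisible by $q^{n-2}+1$, because any $a\leq n$ with $q^{n-2}+1\mid q^a-1$ forces $a\geq 2(n-2)>n$. For $\mathcal{C}_3$, the field-extension degree $r$ must divide both component dimensions $2(n-2)$ and $4$, so $r=2$ and $M\cong \Omega^\epsilon_n(q^2).[4]$ with $n$ even; reading the action of $y$ over $\f{2}$ gives blocks of dimensions $n-2$ and $2$, each of $-$-type, so $\epsilon=+$, producing case ii). The alternative $\mathcal{C}_3$ subgroup $GU_n(q).2$ (for $n$ even) is excluded by a torus computation: no partition $\sum a_i=n$ with $a_i\leq n$ yields $q^{n-2}+1\mid \prod(q^{a_i}-(-1)^{a_i})$. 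Finally the class $\mathcal{S}$ (Example 2.6 a)) with $M\leq Sym(m)\times Z(G)$ and $m\leq 2n+2$ is dismissed by the three-prime argument already used in Lemma \ref{bertrand-sp}. The main technical obstacle I anticipate is the careful sign bookkeeping in the $\mathcal{C}_1$ non-degenerate stabilizer and in the $\mathcal{C}_3$ field extension, together with the ad hoc treatment of the Zsigmondy exception $(n,q)=(5,2)$.
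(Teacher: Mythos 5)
Your overall strategy coincides with the paper's (verify $y$ is a strong $ppd(2n,q;2(n-2))$-element, treat $(n,q)=(5,2)$ via the ATLAS, then run through the Aschbacher classes using Theorem \ref{main}), and your handling of $\mathcal{C}_1$, $\mathcal{C}_2$ and $\mathcal{C}_3$ is sound. But there are two genuine gaps. First, Remark \ref{no-c5} does \emph{not} dispose of all of $\mathcal{C}_5$: it only covers subfield subgroups of the \emph{same} type $G_d(\tilde q)$, i.e.\ $O^+_{2n}(q_0)$. For $q=q_0^2$ the class $\mathcal{C}_5$ of $\Omega^+_{2n}(q)$ also contains subgroups of type $O^-_{2n}(q_0)$, which the remark says nothing about; the paper excludes these by a separate (easy) order computation, showing $q_e\mid |O^-_{2n}(q_0)|$ would force $2(n-2)\le i\le n$. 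You need to add this case.

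Second, and more seriously, the class $\mathcal{S}$ cannot be "dismissed by the three-prime argument of Lemma \ref{bertrand-sp}." That argument relies on both factors $q^t+1$ and $q^{n-t}+1$ of $|z|$ being large enough to supply three distinct odd primes whose sum exceeds $m\le 2n+2$. Here the second factor of $|y|$ is only $q^2+1$, and the argument breaks down: for $(n,q)=(8,2)$ one has $|y|=\mathrm{lcm}(2^6+1,\,2^2+1)=65=5\cdot 13$, $q_e=13=2n-3$, $q_4=5$, and $m=18=13+5$, so $Sym(18)$ \emph{does} contain elements of order $|y|$ and no sum-of-primes contradiction is available. The paper instead uses a centralizer argument (a power $\sigma$ of $y$ is a $q_e$-cycle, and $(q^2+1)/(2,q-1)$ must be the order of an element of $C_{Sym(m)}(\sigma)/\langle\sigma\rangle\times Z(G)\le Sym(5)\times C_4$), which reduces to $q\in\{2,3\}$, and then a case-by-case analysis in which the surviving case $(n,q)=(8,2)$ is killed only by the ad hoc observation that $Alt(18)$ contains an element of order $77$ while $\Omega^+_{16}(2)$ does not, so $Alt(18)\not\le\Omega^+_{16}(2)$. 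Without some substitute for this analysis your treatment of $\mathcal{S}$ is incomplete.
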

\begin{proof} Let $G=\Omega^{+}_{2n}(q)$, $n \ge 5$ and $M$ a maximal subgroup of
$G$ containing  $y,$ where $y\in G$ has order $\frac{(q^{n-2}
+1)(q^2+1)}{( q^{n-2}+1,q^2 +1)(2,q-1)}$ and action of type $2(n-2)\oplus 4.$\\
If $(n,q)= (5,2),$ the inspection in \cite{atlas} shows that the
only maximal subgroups of $\Omega^{+}_{10}(2)$ containing an element
of
order 45 are conjugate to $\Omega^+_{10}(2) \cap (O^{-}_4(2) \bot \,O^{-}_{6} (2)).$\\
If $(n,q)\neq (5,2),$ then $y$ is a strong $ppd(d,q;e)$-element, for
$d=2n$ and $e=2(n-2).$ By Theorem \ref{main}  $M$ belongs to one of
the classes $\mathcal C_i$, $i=1,\,2,\,3,\ 5$ or to $\mathcal{S}$
and is described in Example 2.6 a) of \cite{gpps}.

\vskip 3pt

\noindent $\mathcal{C}_1$.\ \ \  Suppose that $M$ is of type $P_m$.
Then, by Proposition 4.1.20 of \cite{kl}, the condition
$q_{2(n-2)}\mid |P_m|$ forces $m=1.$ On the other hand the
characteristic polynomial of $y$ is the product of two irreducible
factors of degree $4,\, 2(n-2);$ thus $y$ has no eigenvalues and it
cannot belong to a conjugate of $P_1.$\\
 Assume now  $M=\Omega^+_{2n}(q) \cap(O^{\epsilon}_m(q) \bot O^{\epsilon}_{2n-m}(q))$, with $ 1
\le m < n$ . Then $2n-m \ge 2n-4,$ hence $m \le 4.$ Since
$2(n-2)>4,$ the action of $y$ is compatible only with the choice
$m=4,\ \epsilon=-.$ \\ Finally if $M=Sp_{2(n-1)}( 2^f),$ then it is
the stabilizer of a non-degenerate subspace of dimension 1 and again
the action of $y$ excludes this opportunity.
\vskip 3pt

\noindent $\mathcal{C}_2$.\ \ \ \ The maximal groups $M\in
\mathcal{C}_2$ are described in Example 2.3 of \cite{gpps}. Thus
$M \le GL(1,q) \wr Sym(2n) $ and $q_e= e +1= 2n -3$. Observe that
$5\leq q_4\neq q_e$ and $(q_e q_4, q-1)=1$: hence $q_e \cdot q_4$
is the order of an element in $Sym(2n)$ which implies $2n\geq
q_e+q_4\geq 2n+2,$ a contradiction.

\vskip 3pt

\noindent $\mathcal{C}_3$.\ \ \ \ \ If $M$ is of type $GU_n(
q^2)$, $n$ even, then $n-2$ is even and  $q^{n-2} +1$ cannot
divide $|M|$.

If $M$ is of type $O_n( q^2)$, $n$ odd, then
$$\pi(|M|)=\pi\bigg(p \cdot \prod_{i=0}^{(n-1)/2}(q^{4i} - 1)\bigg
)
$$
 and the condition $q_{e}$ divides $|M|$, implies  $n-2 \mid 2i$
 that is $n-2$ divides $i \le (n-1)/2$, which is
impossible since $n \ge 5$.

We are left only with  $M=\Omega^+_{n}(q^2).[4]$, $n$ even.

\vskip 3pt

\noindent $\mathcal{C}_5$.\ \ \  By Remark \ref{no-c5}, the only
case  to consider is  $M$ of type $O ^-_{2n}( q_0)$, $q=q_0^2.$
Then
$$\pi(|M|) \subseteq \pi\bigg(p \cdot \prod_{i=0}^{n}(q^{i} -1)
\bigg),$$ and $q_e$ divides $|M|$ implies $2(n-2)\le i\le n$
against $n\geq 5.$

\vskip 4pt $\mathcal{S}$. \ \ \ The subgroup $M$ is such that
$Alt(m)\le M \le Sym(m) \times Z(G),$  with $m= 2n +1$, if $p$ does
not divide $m$, or $m=2n+2$ if $p$ divides $m$. Moreover $q_e= 2n -3
\ge m-5.$ Let $\sigma\in M,$ a power of $y$ of order $q_e.$ Since
$|Z(G)|=(4,q^n-1),$ it follows that we can assume
 $\sigma\in Sym(m)$ a $q_e-$cycle and
 $\frac{(q^2 +1)q_e}{(2,q-1)}$ is the order of an element in
$C_{Sym(m)}(\sigma)\times Z(G)= \langle \sigma \rangle \times
Sym(m-q_e)\times C_{(4,q^n-1)} \leq \langle \sigma \rangle \times
Sym(5)\times C_{4}.$ Thus $q^2 +1/(2,q-1)$ is the order of an
element in $Sym(5)\times C_{4},$ and thus $q=2, 3.$
\\Let $q=3.$ Then to fulfill the condition $2n-3$ prime which
divides $3^{n-2}+1,$ we need $n\geq 8.$ Moreover
$\frac{|y|}{(2n-3)}$ is the order of an element in $Sym(5)\times
C_{4}$ hence $\frac{(3^{n-2}+1)5}{(2n-3)(10,3^{n-2}+1) }\leq 24,$
that is $5(3^{n-2}+1)\leq 24(2n-3)(10,3^{n-2}+1),$ which gives no
solution for $n\geq 8.$\\ Let $q=2.$ If $n=5,$ then $2n-3=7$ does
not divide $2^3+1;$ if $n=,\ 6,\ 9$ then $2n-3$ is not a prime.  if
$n=7$ then $11=2n-3$ divides $2^5+1$ but $|y|/11=15$ is not the
order of an element in $Sym(5)\times C_4.$ If $n=8$ then $2n-3=13$
divides $2^6+1=65=|y|$ and $m=18.$ But it easily observed that
$Alt(18)\not\leq \Omega^+_{16}(2):$ namely in $Alt(18)$ there is an
element of order $11 \cdot 7$ but there is no element of such an
order in $\Omega^+_{16}(2).$ Observe also that if $n=10,$  the
condition $2_{16}=17$ is not realized. Finally, for $n\geq 11,$
there are no case since $\frac{(2^{n-2}+1)5}{(2n-3)(5,2^{n-2}+1)
}\leq 24$ cannot hold.
\end{proof}

\begin{prop}\label{ortogonali +}
Let  $G=\Omega^+_{2n}(q)$, $n\ge 5$. Then $G$ is not 2-coverable.
\end{prop}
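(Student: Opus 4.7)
The plan is to adapt to $G=\Omega^+_{2n}(q)$ the contradiction strategy used for the previous classical groups in Propositions~\ref{simplettici}--\ref{ortogonali dispari}. Suppose, for contradiction, that $\{H,K\}$ is a $2$-covering of $G$ with maximal components. Both the distinguished element $x$ of Lemma~\ref{msw+} and the distinguished element $y$ of Lemma~\ref{max-y} lie in $[H]\cup[K]$. A direct comparison of the two lists of maximal subgroups produced by these lemmas shows them to be disjoint: the subgroups of Lemma~\ref{msw+} either stabilise a non-degenerate subspace of dimension $2$ or $1$, or are defined by an $\mathbb{F}_{q^2}$-structure different in kind from those of Lemma~\ref{max-y}, and none of these is compatible with the $2(n-2)\oplus 4$ action of $y$. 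Hence, after relabelling if necessary, $x\in H$ and $y\in K$, so $H$ is one of at most four groups from Lemma~\ref{msw+} and $K$ is one of at most two from Lemma~\ref{max-y}, giving at most eight combinations.

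To eliminate each combination I would introduce a third test element $w\in G$, namely the linear Singer cycle of dimension $2n$ recalled at the end of Section~2: an element of order $q^n-1$ with action of type $n\oplus n$, arising from the diagonal embedding $GL_n(q)\hookrightarrow O^+_{2n}(q)$. Since its two irreducible $\mathbb{F}_q$-summands are isomorphic Singer modules, the invariant non-zero proper $\mathbb{F}_q$-subspaces of $w$ all have dimension $n$; in particular $w$ has no invariant subspace of dimension $1$, $2$ or $4$, which immediately rules out types~(i),(ii) of Lemma~\ref{msw+} and type~(i) of Lemma~\ref{max-y}. For the remaining three candidates, all defined by an $\mathbb{F}_{q^2}$-structure on the natural module, the argument splits on the parity of $n$: when $n$ is odd any $\mathbb{F}_{q^2}$-subspace has even $\mathbb{F}_q$-dimension, so no $n$-dimensional invariant subspace of $w$ can carry one, and the candidates are killed at once; when $n$ is even a finer argument is required, comparing the factorisation of the characteristic polynomial of $w$ over $\mathbb{F}_q$ (two irreducible degree-$n$ factors reciprocal to each other) with the factorisation forced by descent from $\mathbb{F}_{q^2}$, and invoking Lemma~\ref{aritme} on the divisibility properties of $q^n-1$.

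The main obstacle is precisely this even-$n$ sub-case, in which a primitive prime divisor $q_n\in P_n(q)$ genuinely divides both $|GU_n(q^2).2|$ and $|\Omega^+_n(q^2).[4]|$, so naive order arguments fail. The resolution is geometric: any $\mathbb{F}_{q^2}$-linear realisation of $w$ would force the two degree-$n$ $\mathbb{F}_q$-factors of its characteristic polynomial to be Galois-conjugate over $\mathbb{F}_{q^2}/\mathbb{F}_q$, which is incompatible with their being reciprocal over $\mathbb{F}_q$ for $n\ge 5$. Finally, the Zsigmondy exceptions and small cases --- chiefly $(n,q)\in\{(5,2),(6,2)\}$ --- are to be dispatched by \textsc{atlas}-style bookkeeping in the spirit of Remark~\ref{$Sp_10(2)$} and Lemma~\ref{$SU_5(4)$}, possibly by replacing the linear Singer cycle with the auxiliary element of order $\frac{(q^{n-3}+1)(q^3+1)}{(q^{n-3}+1,q^3+1)(2,q-1)}$ coming from $\Omega^-_{2(n-3)}(q)\bot\Omega^-_6(q)\hookrightarrow G$ with action $2(n-3)\oplus 6$, which supplies the extra arithmetic leverage in the residual cases.
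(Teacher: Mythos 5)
Your overall framework (force $H$ into the list of Lemma \ref{msw+} and $K$ into that of Lemma \ref{max-y}, then kill every combination with a third test element) is exactly the paper's, and your test element of order $q^n-1$ with action $n\oplus n$ is essentially the element the paper uses when $n$ is odd. The gap is in the even-$n$ case, and it is not repairable with this element. You correctly observe that a primitive prime divisor $q_n$ divides both $|GU_n(q^2).2|$ and $|\Omega^+_n(q^2).[4]|$, but your proposed ``geometric'' rescue is false: when $n$ is even, an irreducible degree-$n$ polynomial over $\mathbb{F}_q$ splits over $\mathbb{F}_{q^2}$ into two Galois-conjugate irreducible factors of degree $n/2$, so there is no incompatibility between the two degree-$n$ factors of the characteristic polynomial of $w$ being reciprocal over $\mathbb{F}_q$ and the configuration being $\mathbb{F}_{q^2}$-linear. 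Concretely, the Singer cycle of $GL_n(q)$ is multiplication by a generator of $\mathbb{F}_{q^n}^*$ on $\mathbb{F}_{q^n}=\mathbb{F}_{(q^2)^{n/2}}$ and hence already lies in the subfield subgroup $GL_{n/2}(q^2)$; pushing this through the diagonal embedding $g\mapsto\mathrm{diag}(g,(g^*)^{-1})$, each of $GU_n(q^2).2\cap G$ and $\Omega^+_n(q^2).[4]$ contains a semisimple element with exactly the order, the action of type $n\oplus n$ and the characteristic polynomial of your $w$. So $w\in[H]\cup[K]$ for the surviving even-$n$ combinations and your contradiction evaporates.

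The paper avoids this by changing the test element when $n$ is even: it takes $g$ of order $(q^{n-1}-1)/(2,q-1)$ and exploits the primitive prime divisor $q_{n-1}$ (with $n-1$ odd), which divides none of the candidate orders except when $q=3$ and $H=\Omega_{2n-1}(3).2$; that residual case is then settled by an element of order $(3^{n/2}+1)/2$ with action $n\oplus n$ (forcing $K=\Omega^+_n(9).[4]$) together with the observation that neither component contains regular unipotent elements. You would need something of this kind: your auxiliary element with action $2(n-3)\oplus 6$ is offered only for small cases and is never analysed against $GU_n(q^2).2$ or $\Omega^+_n(q^2).[4]$. A smaller point: for $n$ odd your subspace argument against $\Omega_n(q^2).2$ needs more care, since an invariant $\mathbb{F}_q$-subspace of odd dimension need not itself be an $\mathbb{F}_{q^2}$-subspace; an order count with $q_n$ (as in the paper) is cleaner.
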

\begin{proof}
Let $G=\Omega^+_{2n}(q),$ $n \ge 5$ and suppose, by contradiction,
that  $\{H,K \}$ is a 2-covering of $G$. We can assume that $H$ is
described in Lemma \ref{msw+} and that $K$ is described in Lemma
\ref{max-y}.\\ We first assume $n$ odd. Then $$
H\in\{\Omega^+_{2n}(q) \cap(O^-_2( q)\bot\, O^-_{2(n-1)}(q)),\
\Omega_{n}(q^2).2 \}$$ or $q=3$ and $H=\Omega_{2n-1}(3).2,$ while
$$K=\Omega^+_{2n}(q) \cap(O^-_4( q)\bot\, O^-_{2(n-2)}(q)).$$ Let $g\in G$ with
$|g|=\frac{q^n-1}{(2,q-1)}.$ Then $|g|$ divides neither the order of
$H$ nor the order of $K.$\\ Next let $n$ be even. Then $$
H\in\{\Omega^+_{2n}(q) \cap(O^-_2( q)\bot\, O^-_{2(n-1)}(q)),\
\Omega^+_{2n}(q) \cap (GU_n(q^2).2)\}$$ or $q=3$ and
$H=\Omega_{2n-1}(3).2,$ while $$ K\in\{ \Omega^+_{2n}(q)
\cap(O^{-}_4(q) \bot\, O^{-}_{2(n -2)} (q)),\ \Omega^+_n(
q^2).[4]\}.$$ Let
 $g\in G$ with
$|g|=\frac{q^{n-1}-1}{(2,q-1)}.$ Note that $q_{n-1}$ does not divide
the order any candidates $H$ or $K,$ with the exception of $q=3$ and
$H=\Omega_{2n-1}(3).2.$ \\ So we can assume $n$ even, $q=3,$
$H=\Omega_{2n-1}(3).2 $ and $$ K\in\{ \Omega^+_{2n}(3)
\cap(O^{-}_4(3) \bot\, O^{-}_{2(n -2)} (3)),\ \Omega^+_n(9).[4]\}.$$
Pick in $G$ an element of order $\frac{3^{n/2}+1}{2}$ and action of
type $n\, \oplus\, n,$ which forces to $K$ to be $\Omega^+_n(9).[4]$
and finally observe that neither $H=\Omega_{2n-1}(3).2$ nor
$K=\Omega^+_n(9).[4]$ contain regular unipotent elements.

\end{proof}


\section{Orthogonal groups with Witt defect 1}

Let $G =\Omega^{-}_{2n}(q),\ n\geq 5$ and let $s\in G$ be a Singer
cycle. Then $|s|= \frac{q^{n}+1}{(2,q-1)}$ and the maximal subgroups
of $G$ containing $s$ are known.
\begin{lemma}\cite[Theorem 1.1]{msw}\label {msw-} If $M<\cdot\ \Omega^-_{2n}(q),\
 n\geq 5$
contains a Singer cycle then, up to conjugacy, one of the
following holds:
\begin{itemize}
  \item[i)]
$M= \Omega^-_{2n/r}(q^r).r,$ where $r$ is a prime divisor
of $n; $
 \item[ii)] $n$ is odd and $M= \Omega^-_{2n}(q)\cap (GU_{n}
 (q^2).2).$
\end{itemize}
\end{lemma}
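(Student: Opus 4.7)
The plan is to imitate the strategy used in Lemmas \ref{msw-sp}, \ref{malleu}, and \ref{msw+} of the present paper, combining the extremal ppd-property of the Singer cycle with Aschbacher's classification. Write $s$ for the Singer cycle of $G=\Omega^-_{2n}(q)$, of order $(q^n+1)/(2,q-1)$. First I would observe that, because $s$ is multiplication by a generator of the $\f{2n}$-stabilizer of the quadratic form, it acts irreducibly on $V=\f{}^{2n}$ with characteristic polynomial irreducible of degree $2n$. Second, since every primitive prime divisor $q_{2n}\in P_{2n}(q)$ divides $q^n+1$ but no $q^i-1$ with $i<2n$, the element $s$ is a \emph{strong} $ppd(2n,q;2n)$-element, so Theorem \ref{main} places any maximal overgroup $M$ in one of the Examples 2.1--2.9 of \cite{gpps}.

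With $d=e=2n$ most classes are immediately eliminated. Class $\mathcal{C}_1$ is killed by irreducibility of $s$ on $V$; class $\mathcal{C}_2$ (decompositions $V=V_1\oplus\cdots\oplus V_k$) is killed for the same reason. Classes $\mathcal{C}_4$ and $\mathcal{C}_7$ are excluded by Theorem \ref{main}~i). Class $\mathcal{C}_5$ is excluded by Remark \ref{no-c5} (applied with the strong ppd element $s$). Classes $\mathcal{C}_6$ and $\mathcal{C}_8$ do not occur among maximal subgroups of $\Omega^-_{2n}(q)$ for $n\geq 5$ according to Table 3.5.E of \cite{kl}, once the action and the arithmetic of $|s|$ are taken into account. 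This leaves only $\mathcal{C}_3$ and $\mathcal{S}$.

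For $\mathcal{C}_3$, the structure theorem in \cite{kl} restricts the field-extension subgroups of $\Omega^-_{2n}(q)$ to those of type $\Omega^{\epsilon}_{2n/r}(q^r).r$ with $r$ prime dividing $n$, and, when $n$ is odd, the additional type $GU_n(q^2).2$ arising from the $\mathbb{F}_{q^2}$-Hermitian structure on $V$. Using Lemma \ref{aritme} one checks that $(q^n+1)/(2,q-1)$ divides $|\Omega^{-}_{2n/r}(q^r).r|$ precisely (among these) when the sign is $-$, yielding (i), and that the Singer cycle of $GU_n(q^2)$ (see Table \ref{1}) has order $q^n+1$, which embeds up to the outer $2$ to account for the $(2,q-1)$ factor, giving (ii) when $n$ is odd. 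For $n$ even the element of order $(q^n+1)/(2,q-1)$ simply does not sit in any conjugate of $GU_n(q^2).2$, as its Hermitian Singer is a low-Singer cycle of order $q^{n-1}+1$, ruling out this possibility.

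Finally, for $\mathcal{S}$ I would argue as in the proofs of Lemmas \ref{bertrand-sp} and \ref{bertrand-odd}: any $q_{2n}\in P_{2n}(q)$ satisfies $q_{2n}\geq 2n+1$, and the restriction $e=d$ together with the tables in Examples 2.6--2.9 of \cite{gpps} forces an almost simple group whose order is divisible by such a large prime, which is arithmetically impossible for $n\geq 5$ except in a handful of small configurations. The hard part will be the case-by-case elimination in $\mathcal{S}$ and the careful matching of field-extension data in $\mathcal{C}_3$ (tracking the Witt sign $\epsilon$, the factor $r$, and the $(2,q-1)$ denominator). Because this lemma is quoted directly from \cite[Theorem 1.1]{msw}, a formal proof here would amount to an appeal to that reference; the above is the natural internal verification that makes it fit seamlessly with the rest of the arguments in this paper.
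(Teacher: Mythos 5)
The paper does not actually prove this lemma: it is quoted verbatim from \cite[Theorem 1.1]{msw}, so there is no internal proof to match your sketch against, and your reconstruction must stand on its own. As a template it is the right one (it mirrors what the paper does for Bertrand elements in Lemmas \ref{bertrand-sp}, \ref{bertrand-odd} and \ref{max-y-}), but two steps have genuine gaps. First, eliminating $\mathcal{C}_2$ ``for the same reason'' as $\mathcal{C}_1$ (irreducibility of $s$) does not work: an imprimitive group such as $GL_1(q)\wr Sym(2n)$ contains irreducibly acting elements (monomial matrices with characteristic polynomial $x^{2n}-c$), so irreducibility rules out subspace stabilizers but not decomposition stabilizers. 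The correct elimination is the order argument the paper itself uses for $\mathcal{C}_2$ elsewhere: Example 2.3 of \cite{gpps} forces $q_{2n}=2n+1$, and one must then produce a second prime factor of $(q^n+1)/(2,q-1)$ to overload $Sym(2n)$ — a step that needs to be written down, not asserted.

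Second, and more seriously, with $e=d=2n$ neither clause ii) nor iii) of Theorem \ref{main} applies, so \emph{all} of Examples 2.5--2.9 of \cite{gpps} remain live: $\mathcal{C}_6$ is not vacuous for $\Omega^-_{2n}(q)$ (it can occur when $2n$ is a power of $2$, e.g.\ $n=8$, and the relevant table in \cite{kl} is 3.5.F, not 3.5.E), and the alternating, sporadic and cross-characteristic Lie-type examples in $\mathcal{S}$ all require individual treatment. This is precisely the regime where genuine exceptions arise — compare $PSL_2(11)<SU_5(4)$ in Lemma \ref{malleu}, which is an $\mathcal{S}$-overgroup of a Singer cycle — so the case-by-case elimination you defer as ``arithmetically impossible except in a handful of small configurations'' is the actual mathematical content of the theorem and cannot be omitted. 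A minor point: for $n$ even the group $GU_n(q^2).2$ is not a subgroup of $\Omega^-_{2n}(q)$ at all (it embeds in the plus-type group), so there is nothing to ``rule out'' there; the dichotomy in the statement comes from the existence of the $\mathcal{C}_3$ subgroup, not from a divisibility failure. Given all this, the honest proof here is the citation of \cite{msw}, and your sketch should be presented as a plausibility check rather than a verification.
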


We now look for the  second component of a 2-covering.

\begin{lemma}\label{max-y-}
Let $G=\Omega^{-}_{2n}(q)$, $n \ge 5 .$ Let consider, for any $n \ge
5 ,$ an element  $y\in G$  with
$$|y|=\frac{(q^{n-1} +1)(q-1)}{(2,\,q-1)^2}$$  and for any $n$ even,
 an element $z\in
\Omega^{-}_{2n}(q)$ with $$|z|=\frac{(q^{n-1}
-1)(q+1)}{(2,\,q-1)^2},$$ action of type $(n-1)\oplus (n-1)\oplus 2$
if $q\neq 3$ and action of type $(n-1)\oplus (n-1)\oplus 1\oplus 1$
if $q=3.$
\\If $M<\cdot\ G$, up to conjugacy, contains $y $ and $z,$ then $n$
is odd and one of the following holds:

\begin{itemize}
\item[i)]  $M= P_1;$  
 \item[ii)]  $q\geq 4,$  and
$M=\Omega_{2n}^-(q)\cap (O_2^+(q)\bot O_{2n-2}^-(q);$
\item[iii)] $q=2,$  and $M=Sp_{2(n-1)}(2);$
\item[iv)] $q$ is odd and $M=\Omega_n( q^2).2;$
\end{itemize}
or
\begin{itemize}
\item[v)] $q=3,5$ and $M=\Omega_{2n}^-(q)\cap (O_1(q)\bot
O_{2n-1}(q)).$
\end{itemize}
 \end{lemma}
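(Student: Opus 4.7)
The plan is to exploit that $y$ is a strong $ppd(2n, q; 2(n-1))$-element and then invoke Theorem~\ref{main}. Any primitive prime divisor $q_{2(n-1)} \in P_{2(n-1)}(q)$ is odd of size at least $2n-1 \geq 9$; by primitivity it does not divide $q^{n-1}-1$, hence divides $q^{n-1}+1$, and being coprime to $(2,q-1)^2$ it divides $|y|$. With $d = 2n$ and $e = 2(n-1) = d - 2$, Remark~\ref{no-c5} excludes the class $\mathcal{C}_5$, and Table~3.5.D of~\cite{kl} together with Theorem~\ref{main} restricts $M$ to $\mathcal{C}_1 \cup \mathcal{C}_2 \cup \mathcal{C}_3$ or to $\mathcal{S}$, where in $\mathcal{S}$ the relevant possibilities lie in Examples~2.6(a)--2.9 of~\cite{gpps}.

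Next I would carry out the usual class-by-class elimination, in the spirit of Lemmas~\ref{bertrand-sp}, \ref{bertrand-odd}, and~\ref{max-y}. In $\mathcal{C}_1$, the action of $y$ (whose characteristic polynomial has an irreducible factor of degree $2(n-1)$) rules out every parabolic except $P_1$, giving~(i), and restricts non-degenerate-decomposition stabilizers to those with a complement of Witt dimension $1$ or $2$; this produces~(ii) when $q \geq 4$ (using Lemma~\ref{action} to guarantee the irreducible action of $\Omega^-_2(q)$) and~(v) when $y$ additionally fixes a non-degenerate anisotropic vector. In $\mathcal{C}_2$, the degree bound of Example~2.3 fails: the prime $q_{2(n-1)} = 2n-1$ combined with another prime factor of $|y|$ (drawn from $q-1$ or $q^{n-1}+1$) exceeds the symmetric degree $2n$. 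In $\mathcal{C}_3$, Lemma~\ref{aritme} and Remark~\ref{primitivi} leave only the extension-field subgroup $\Omega_n(q^2).2$ with $n$ odd, which is~(iv). In $\mathcal{S}$, Example~2.6(a) yields $M \leq Sym(m) \times Z(G)$ with $m \leq 2n+2$, and cycle-length bounds as in Lemma~\ref{bertrand-sp} confine $q$ to small values, after which direct verification leaves only the exceptional isomorphism $Sp_{2(n-1)}(2) \cong \Omega_{2n-1}(2)$ of~(iii).

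The main obstacle is the second filter: for $n$ even one must show that no surviving $M$ contains a copy of $z$, while for $n$ odd one must confirm that each listed $M$ does. The decisive input is the action of $z$: when $n$ is even, $n-1$ is odd, and the two irreducible blocks of $z$ of dimension $n-1$ on complementary totally singular subspaces obstruct containment in each candidate of (i)--(v), because any orthogonal decomposition respected by such an $M$ splits only into pieces of dimension at most $2$ beyond the main block, while the extension-field structure in~(iv) requires $n$ odd from the outset. The finest point is the restriction $q \in \{3, 5\}$ in~(v): here a direct calculation of the eigenstructure of $y$ on its $\Omega^+_2(q)$-summand, combined with Lemma~\ref{action}, shows that $y$ fixes a non-degenerate anisotropic vector precisely when $q \in \{3, 5\}$. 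This combined geometric and arithmetic bookkeeping is the hardest part of the argument.
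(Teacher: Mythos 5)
Your overall strategy (treat $y$ as a strong $ppd(2n,q;2(n-1))$-element, apply Theorem~\ref{main} and Remark~\ref{no-c5}, then eliminate class by class and filter with $z$) is the paper's strategy, but the proposal is a plan rather than a proof, and two of the places where you do commit to a mechanism are wrong.

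First, you derive case (iii), $M=Sp_{2(n-1)}(2)$, from the class $\mathcal{S}$ via Example~2.6(a) and ``the exceptional isomorphism $Sp_{2(n-1)}(2)\cong\Omega_{2n-1}(2)$''. Example~2.6(a) concerns alternating and symmetric groups, not symplectic groups, and in the paper nothing at all survives from $\mathcal{S}$. The subgroup $Sp_{2(n-1)}(q)$, $q$ even, is a $\mathcal{C}_1$-subgroup of $\Omega_{2n}^-(q)$ (stabilizer of a nonsingular point in characteristic $2$), and the restriction to $q=2$ comes from the fact that $Sp_{2(n-1)}(q)$ has no semisimple element of order a proper multiple of $q^{n-1}+1$, which forces the factor $q-1$ of $|y|$ to be trivial. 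More seriously, because $e=d-2$ here, Theorem~\ref{main}(ii) does \emph{not} dispose of Examples 2.7, 2.8 and 2.9 (sporadic groups and groups of Lie type in defining and non-defining characteristic); the paper spends the bulk of the proof eliminating these one by one via centralizer-order estimates (e.g.\ $2.M_{12}$, $J_1$, $2.Co_1$, $PSp_{2a}(3)$, $PSL_2(s)$). Your proposal omits this entirely, and it is not optional.

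Second, your mechanism for pinning down case (v) to $q\in\{3,5\}$ --- an eigenstructure computation showing $y$ fixes a non-degenerate anisotropic vector exactly for these $q$, ``combined with Lemma~\ref{action}'' --- does not match the geometry (Lemma~\ref{action} is about $\Omega_2^-(q)$, which occurs in none of these candidates; the $2$-dimensional summand relevant to $y$ is of type $O_2^+$) and is not a correct criterion. The paper gets $q=5$ arithmetically: if $M$ is the stabilizer of a nonsingular $1$-space with $q$ odd, then $\Omega_{2n-1}(q)$ contains no semisimple element whose order is properly divisible by $(q^{n-1}+1)/2$, forcing $(q-1)/2\le 2$; the value $q=3$ enters separately, through the maximality data of Table 3.5.H of \cite{kl} applied to the $m=2$ case. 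Finally, the filter by $z$ for $n$ even is asserted via a claim about totally singular blocks obstructing every candidate; the paper instead verifies directly that $|z|$ does not divide $|P_1|$, is not an element order of $Sp_{2(n-1)}(2)$, and is incompatible with $O_2^+(q)\bot O_{2n-2}^-(q)$, while $\Omega_n(q^2).2$ already requires $n$ odd. As stated, your geometric claim is not a proof and, for the candidate in (v), is actually false: the paper notes that $\Omega_{2n}^-(3)\cap(O_1(3)\bot O_{2n-1}(3))$ \emph{does} contain a conjugate of $z$, which is precisely why (v) is listed outside the ``$n$ odd'' alternative.
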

\begin{proof}
Let $G=\Omega^{-}_{2n}(q),\  n \ge 5.$  Let $y\in
G$ be an element of order $ {\displaystyle \frac{(q^{n-1} +1)(q-1)}{(2,\,q-1)^2}}$ 
and for any $n$ even, $z\in \Omega^{-}_{2n}(q)$  be an element with
$|z|={\displaystyle\frac{(q^{n-1} -1)(q+1)}{(2,\,q-1)^2}}$ and
action of type $(n-1)\oplus (n-1)\oplus 2$ if $q\neq 3$ and action
of type $(n-1)\oplus (n-1)\oplus 1\oplus 1$ if $q=3.$
 Then for $d=2n$ and $e=2(n-1)$,
 $\,y\,$ is a strong $ppd(d,q;e)$-element of $GL_d(q).$\\
By Theorem \ref{main}, Remark  \ref{no-c5} and Table $3.5.$ F in
\cite{kl}, $M$ belongs to one of the classes $\,\mathcal C_i\,$,
$i=1,2,3$ or to $\mathcal S$ and it is described in the Examples
2.6-2.9 of \cite {gpps}.

\vskip 3pt

\noindent $\mathcal{C}_1$. \ \ \ If $M$ is of type $P_m,$ then, by
Proposition 4.1.20 of \cite{kl}, we have $n-m \ge n-1$, that is $m
=1.$ The choice $M=P_1$ is excluded when $n$ is even, since  $|z|$
does not divide $|P_1|.$
\\ Let $M=Sp_{2(n-1)}(q),$ with $q$ even. Since $Sp_{2(n-1)}(q)$ does not
contain a semisimple element of order a proper multiple of
$q^{n-1}+1,$ we get $q=2.$ But if $n$ is even, then $|z|$
 is  not the order of an element in $Sp_{2(n-1)}(2).$\\
 Let $M$ be of type $O_m^\epsilon(q) \bot O_{2n-m}^{-\epsilon}(q),$
with $1\leq m\leq n,$  $\,\epsilon \in \{+,\ -,\ \circ\}$ and $q$
odd when $m$ is odd. By the structure of $M$ given in Proposition
4.1.6 of \cite{kl}, we deduce that $q_e\,\mid \,|O_m^\epsilon(q)
\bot O_{2n-m}^{-\epsilon}(q)|,$ hence $m=2$ and $\epsilon=+$ or
$m=1$ and $\epsilon=\circ,$ $q$ odd. This gives  two possible
structures for $M.$ \\The first, related to $m=2$, is
$$M=\Omega_{2n}^-(q)\cap(O_2^+(q)\bot O_{2n-2}^-(q)),$$  with $q\not \in\{2,\ 3\}$
it is the natural component for $y.$ When $n$ is even, this subgroup
cannot contain $z.$\\By Table $3.5.$ H in \cite{kl}, for $q=2$ we
get again $M=Sp_{2(n-1)}(2)$ and for $q=3$ we have
$M=\Omega_{2n}^-(3)\cap(O_1(3)\bot O_{2n-1}(3)),$ which contain a
conjugate of $z.$\\
The second structure for $M$, related to $m=1,$ is
$$M=\Omega_{2n-1}(q).c$$ with $c\mid 2$ and $q\neq 3$ odd. The group $\Omega_{2n-1}(q)$ does not contain
semisimple elements with order properly divisible by $\frac{q^{n-1}
+1}{2},$ hence we get the condition $\frac{q-1}{2}\leq 2,$ which
leaves us with $q= 5.$

\vskip 3pt

\noindent $\mathcal{C}_2$.\ \ \ Let $M\in \mathcal{C}_2.$ Then $M$
is described in Example 2.3 in \cite{gpps}, $q_e= e +1= 2n -1$ and,
by Proposition 4.2.15 in \cite{kl}, $q\equiv 3(mod\ 4)$ is a prime,
$n$ is odd and $M\leq 2^{2n}.Sym(2n).$ Thus we get $q^{n-1}+1\equiv
3^{n-1}+1(mod\ 4)\equiv 2(mod\  4).$ On the other hand
$(q^{n-1}+1)/2>2n-1;$ then we obtain an odd prime $r$ with
$r\,q_e\mid |y|$ and an element of order $r\,q_e$ in $Sym(2n).$
Therefore $2n\geq r+q_e\geq 2n+2,$ a contradiction.

\vskip 3pt \noindent  $\mathcal{C}_3$. \ \ \ If $M$ is of type
$GU_n( q^2)$, $n$ odd, then $q_{n-1} $ does not divide $|M|$.

If $M=\Omega_n(q^2).2$, with $qn$ odd, then there is an element of
the required order.

If $M=\Omega_{2n/r}^-(q^r).r$, with $r$ a prime dividing $n$, then
$q_e\neq r$ does not divide $|M|$.

\vskip 3pt \noindent  $\mathcal{S}$. \  We examine the Examples
2.6-2.9 in which $\mathcal{S}$ decomposes. Observe that we have
$e=d-2$ and that, since $d\geq 10,$ we exclude Examples 2.6 {\it(b),
(c)}.

\vskip 3pt \noindent{\it Example 2.6(a): S an Alternating Group.}\\
 Here we have $ Alt(m)\le M \le
Sym(m) \times Z(G),$ with $\,m-1= 2n\,,$ if $\,p\,$ does not divide
$\,m\,$ or $\,m-2=2n\,$ if $\,p\,$ divides $\,m\,$. Moreover $q_e=
e+1= 2n -
1\geq  m-3$ which gives also $q_e \ge 7.$\\
Let $\sigma$ be an element of $Sym(m)$ of order $q_e$. Then $\sigma$
is a $q_e-$cycle and $|y|$ divides the order of $|C_{ Sym(m) \times
Z(G)}(\sigma)|$, which implies $q^{n-1}+1$
 divides $ 24(2n-1).$ The condition $2n - 1$ prime gives $n\geq 6.$ \\
Let $n=6$ and $q=2;$ then $m\in \{13,14\}$ and $|y|=33$ requires
$m=14,$ which leaves us with the case $ Alt(14)\le M \le Sym(14).$
But then $\Omega_{12}^-(2)$ would be a minimal module for $Alt(14),$
against the fact that $Alt(14)$ fixes no quadratic form on its
minimal module (see Proposition 5.3.7 in \cite{kl}).
\\
If $n\geq 7$ and $q=2$ or if $n\geq 6$ and $q>2,$ the condition
$q^{n-1}+1$ divides $ 24(2n-1)$ cannot hold.

\vskip 3pt \noindent {\it Example 2.7: $S$ a sporadic simple
group.}\\
Recall that the centralizer of an element of a sporadic group can
be easily checked in \cite{atlas}.
 There are five cases with $e=d-2$ and $d\geq 8$ even
in Table 5. Observe that in the first column of that Table, we read
$M'=C.S,$ where $C$ embeds in $Z(G).$ In particular $|C|$ divides
 $4.$ This reduces our analysis only to three cases.
 Observe also that $M\leq C.Aut(S).$\\
 If $n=6$ and $M'=2.M_{12}$, then
$q_e=11$ and since the centralizer in $2.Aut(M_{12})\geq M$ of an
element of order 11 has order $22$, this implies $\frac{(q^5
+1)(q-1)}{(2,q-1)^2} \le 22$,
a contradiction.\\
If $n=10$ and $M'=J_1=M$, $q_e=19$ and $|C_S(g)| =19$, for any $g$
of order 19 in $S$. This implies $\frac{(q^9 +1)(q-1)}{(2,q-1)^2}
\le 19$, a
 contradiction.\\
If $n=12$ and $M'= 2.Co_1$, $q_e=23$, then $|C_{2.Co_1}(g)|= 46$,
for any $g$  element of order 23 in $S$ and we get $q^{11} +1 \le
46\cdot 2$, a contradiction.


\vskip 3pt \noindent {\it Example 2.8: S  a simple group of Lie type
in characteristic $p.$}\\ No case arises.

\vskip 3pt \noindent {\it Example 2.9: S a simple group of Lie
type in characteristic different from $p.$}\\
In Table 7, we have the examples with $n=7$, $q_e=13,$
 and $\frac{q^6 +1}{(2,q-1)}$ dividing either $8 \cdot 13$ or $12 \cdot 13,$
 which never happens.
We also have the case $n=9$, $q_e=17$ and $\frac{q^8 +1}{2}$
dividing
 $17\cdot  4$, which never happens.
In Table 8, we have to consider the cases in which $e=d-2$, that is
$M/Z(M)$ is isomorphic to a subgroup of $Aut(S)$ containing $S$ and
either $S\cong PSp_{2a}(3)$, for some odd prime $a$, or $S \cong
PSL_2(s),$ for some $s \ge 7$.\\ In the first case $d=2n = (3^a
+1)/2$, $e= 3(3^{a-1} -1)/2= 2(n-1)$, which implies $3$ divides
$n-1.$ The order of $y$ must divide the order of  the centralizer in
$M$ of an element of order $q_e=(3^a -1)/2$ and clearly $M\cong
b.PSp_{2a}(3).c$ where $b,\ c\mid 2.$ Since $(3^a -1)/2$ is the
order of a maximal cyclic torus in $PSp_{2a}(3),$ this implies $|y|$
divides $4 \cdot q_e$ hence $\frac{q^{(n-1)/3} +1}{(2,q-1)}$ divides
4, thus $q^2\leq 7,$ which gives $q=2,$ but $2^{(n-1)/3}+1$ does not
divide 4.

 If $S \cong PSL_2(s),$ $s \ge
7$, we examine the various subcases. If $2n=s=2^c$, with $c$ prime
then $q_e= s-1$, and $M$ embeds in $SL_2(s).c,$ where the cyclic
extension of order $c$ is conjugate to a field automorphism. Now
observe that, since $q_e\neq c,$ an element $g$ of order $q_e$
 in $SL_2(s).c$ is a cyclic torus in
  $SL_2(s)$ and that a field automorphism does not centralize it. Thus we get
$|C_M(g)|\leq q_e$. This implies that $|y|=q_e$. But $n-1$ is odd,
and therefore $q^{n-1} +1$ is
divisible by $(q+1) \cdot q_e$, a contradiction.\\
If  $d=s+1= 2n,$ $s$ an odd prime and $q_e=s= e+1= 2n-1$ we have
$M\leq SL_2(s).2.$ Let $g$ be an element  of order $s$ in $M.$
Then $g^2$ is an element of order $s$ in $M$ and
$|C_{M}(g)|=|C_{M}(g^2)|$ divides  $4s.$ Thus we obtain the
condition
   $\frac{(q^{n-1}+1)(q-1)}{(2,q-1)^2}$ divides $4(2n-1)$, which
 has no solution.
 The only case still to examine is $d=(s+1)/2=
2n$, $q_e=(s-1)/2=2n-1$, $s=r^f,\ r$ odd. Observe that here $n\geq
6$  and $M\leq SL_2(s).2f.$ Let $g\in M$ of order $q_e;$ since $q_e$
cannot divide $f,$ then  $g^{2f}$ has order $q_e=(s-1)/2$ and
belongs to $SL_2(s).$ Due to the partition of $PSL_2(s),$ it is
clear that $g^{2f}$ is conjugate to the diagonal matrix $D$ with
diagonal entries $\lambda^2,\ \lambda^{-2},$ where
$<\lambda>=\mathbf F_s^*$ and that $|C_{SL_2(s)}(g^{2f})|=s-1.$
 Moreover, a field automorphism does not centralize $D,$ hence
  $\,|C_M(g)|=|C_M(g^{2f})|$ divides
  $2|C_{SL_2(s)}(g^{2f})|=2(s-1)=4(2n-1).$
  This  implies, as before, the impossible relation $|y|$ divides $4(2n-1).$
  \end{proof}

\begin{prop}\label{ortogonali-}
Let  $G=\Omega^-_{2n}(q)$, $n\ge 5$. Then $G$ is not 2-coverable.
\end{prop}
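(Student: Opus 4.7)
The plan is to proceed by contradiction, assuming $\{H,K\}$ is a 2-covering of $G=\Omega^-_{2n}(q)$ with $n\geq 5$. Without loss of generality $H$ will contain a Singer cycle, so by Lemma \ref{msw-} we have only two possibilities for $H$: either $\Omega^-_{2n/r}(q^r).r$ for some prime $r\mid n$, or (only when $n$ is odd) $\Omega^-_{2n}(q)\cap(GU_n(q^2).2)$. The first step is to verify, by a standard primitive prime divisor calculation, that the element $y$ of Lemma \ref{max-y-}, and also $z$ in the case $n$ even, cannot be $G$-conjugate into $H$: the prime $q_{2(n-1)}$ divides $|y|$ and, applying Remark \ref{primitivi} to the $q^{rk}\pm 1$ factors of $|\Omega^-_{2n/r}(q^r)|$ or the $q^i\mp 1$ factors of $|GU_n(q^2)|$, one checks that $q_{2(n-1)}\nmid |H|$; a parallel check with $q_{n-1}\mid |z|$ disposes of $z\in[H]$ for $n$ even.

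It follows that $y$, and also $z$ when $n$ is even, must lie in $[K]$. In the even case, Lemma \ref{max-y-} applied to both $y$ and $z$ simultaneously would require $n$ odd, which is the desired contradiction, so the even case is immediately settled. For $n$ odd, Lemma \ref{max-y-} narrows $K$ to exactly five explicit candidates: $P_1$; $\Omega^-_{2n}(q)\cap(O_2^+(q)\bot O_{2n-2}^-(q))$ with $q\geq 4$; $Sp_{2(n-1)}(2)$ with $q=2$; $\Omega_n(q^2).2$ with $q$ odd; or $\Omega^-_{2n}(q)\cap(O_1(q)\bot O_{2n-1}(q))$ with $q\in\{3,5\}$.

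To close the proof for $n$ odd, for each pair $(H,K)$ I would exhibit an element of $G$ lying in neither $[H]$ nor $[K]$. A regular unipotent of $G$, whose Jordan structure on the natural module has a block of size $2n-1$, is not contained in any subgroup preserving a proper non-degenerate orthogonal decomposition of $V$ and not contained in any proper field-extension subgroup; this single choice rules out $[H]$ and simultaneously eliminates the candidates for $K$ of type $O_2^+\bot O_{2n-2}^-$, $Sp_{2(n-1)}(2)$, and $\Omega_n(q^2).2$. What remains is $K\in\{P_1,\ \Omega^-_{2n}(q)\cap(O_1(q)\bot O_{2n-1}(q))\}$, both of which do contain regular unipotents. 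For these I would switch to a regular semisimple element of action-type $(n-1)\oplus(n-1)\oplus 2^-$: its anisotropic $2^-$ summand and its two irreducible $(n-1)$-blocks prevent it from fixing a $1$-dimensional isotropic subspace (so it escapes $P_1$) and from splitting off a non-degenerate $1$-space (so it escapes $O_1(q)\bot O_{2n-1}(q)$), while the primitive prime divisor $q_{n-1}$ keeps it out of $H$ by the same divisibility argument as before.

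The hardest part is expected to be the pairing $H=\Omega^-_{2n}(q)\cap(GU_n(q^2).2)$ with $K=P_1$: since $n-1$ is even in this regime, $q_{n-1}$ can divide $|GU_n(q^2)|$, so a crude divisibility obstruction fails and one must instead analyse the characteristic polynomial of the chosen killing element to show that its irreducible factorisation over $\F_q$ is incompatible with the $GU_n(q^2)$ embedding. Exceptional small parameters, in particular $q\in\{2,3,5\}$ and the smallest values of $n$, will be dispatched by direct ad hoc verification, in the style of the symplectic $Sp_{10}(2)$ analysis carried out earlier in the paper.
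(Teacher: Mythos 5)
Your overall architecture matches the paper's (pin down $H$ via Lemma \ref{msw-}, pin down $K$ via Lemma \ref{max-y-} --- which already forces $n$ odd --- and then exhibit elements lying in neither $[H]$ nor $[K]$), but the separating elements you choose do not all work, and the case you yourself flag as ``hardest'' is genuinely broken rather than merely laborious. Your final element has action type $(n-1)\oplus(n-1)\oplus 2$ and order divisible by $q_{n-1}$; since $n$ is odd, $n-1$ is \emph{even}, so $q_{n-1}$ divides $q^{n-1}-1$, which divides $|GU_n(q^2)|$. Worse, the obstruction is not merely arithmetic: the subgroup $GL_{(n-1)/2}(q^2)\times GU_1(q^2)$ of $GU_n(q^2)$ contains a torus of order $(q^{n-1}-1)(q+1)$ whose generator acts on $V$ over $\F{}$ with exactly the factorisation type $(n-1)\oplus(n-1)\oplus 2$ (the Singer cycle of $GL_{(n-1)/2}(q^2)$ is $\F{}$-irreducible of dimension $n-1$ on each of the two paired totally singular subspaces). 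So your proposed fallback --- showing the characteristic polynomial is ``incompatible with the $GU_n(q^2)$ embedding'' --- cannot succeed: it is compatible. This is precisely why the paper instead takes $u$ of type $4\oplus(n-2)\oplus(n-2)$ with $q_{n-2}$ dividing $|u|$: for $n$ odd, $n-2$ is \emph{odd}, and then $q_{n-2}$ divides none of $|GU_n(q^2).2|$, $|\Omega_n(q^2).2|$, $|\Omega^-_{2n/r}(q^r).r|$, while the absence of eigenvalues of $u$ excludes $P_1$, $O_2^+(q)\bot O_{2n-2}^-(q)$ and $Sp_{2(n-1)}(2)$ in one stroke. (A secondary defect of your element: for $q=3$ the $2^-$ summand is not irreducible by Lemma \ref{action}, so the element acquires eigenvectors in an anisotropic plane and does lie in $O_1(q)\bot O_{2n-1}(q)$.)

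Your regular-unipotent step also over-claims. It is false in general that a regular unipotent element avoids every proper field-extension subgroup: when the extension degree equals $p$, such subgroups can contain regular unipotents (already $GL_1(q^p).p\le GL_p(q)$ does, by the normal basis theorem, and the analogous danger arises for $H=\Omega^-_{2n/p}(q^p).p$ when $p\mid n$). Likewise, whether $Sp_{2(n-1)}(2)\le\Omega^-_{2n}(2)$ contains elements with a single Jordan block of size $2n$ needs an argument, since the restriction of $V$ to it is a non-split uniserial module with layers $1,2n-2,1$; an order count settles it only for special $n$. The paper confines the regular-unipotent argument to the single case $q\in\{3,5\}$, $K=\Omega_{2n}^-(q)\cap(O_1(q)\bot O_{2n-1}(q))$, and otherwise relies on the semisimple, eigenvalue-free element $u$; you would need either to justify these unipotent claims for all $H$ and all $q$, or to retreat to an element of the paper's kind.
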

\begin{proof} Let $\delta=\{H,K \}$ be a 2-covering of
$G=\Omega^-_{2n}(q).$ We can assume
       $H$ as described in Lemma
\ref{msw-} and $K$ as described in Lemma \ref{max-y-}. 
When $q=3, 5$ we observe that $K=\Omega_{2n}^-(q)\cap (O_1(q)\bot
O_{2n-1}(q))$ does not contain regular unipotent elements and that
no candidate $H$ can contain them. Thus, by Lemma \ref{max-y-},
 we reduce our attention to $n$ odd and $K\neq \Omega_{2n}^-(q)\cap (O_1(q)\bot
O_{2n-1}(q)).$ Let $u$ be an element of order $\,\frac{(q^{n-2}
-1)(q^2 +1)}{(q^{n-2}-1, q^2+1)(2,q-1)}\,$ belonging to
$\,\Omega_4^-(q)\bot \Omega_{2(n-2)}^+(q)<G$ and with action of type
$\,4\oplus (n-2)\oplus (n-2)\, .$ Then $u$ has no eigenvalues and
its order is divisible by $q_{n-2}$. But the matrices in any
candidate $K,$ except $\Omega_n( q^2).2,$ admit an eigenvalue and
$q_{n-2}$ does
 not divide the order of $\Omega_n( q^2).2,\ GU_{n}(q^2).2,\
 \Omega^-_{2n/r}(q^r).r.$
\end{proof}



\end{document}